\newtheorem{thm}{Theorem}[section]
\newtheorem{lem}[thm]{Lemma}
\newtheorem{prop}[thm]{Proposition}
\newtheorem{cor}[thm]{Corollary}
\newtheorem{ques}[thm]{Question}
\theoremstyle{definition}
\newtheorem{defn}[thm]{Definition}
\newcommand{\bC}{{\mathbb{C}}}
\newcommand{\bN}{{\mathbb{N}}}
\newcommand{\bR}{{\mathbb{R}}}
\newcommand{\A}{{\mathcal{A}}}
\newcommand{\B}{{\mathcal{B}}}
\newcommand{\D}{{\mathcal{D}}}
\renewcommand{\H}{{\mathcal{H}}}
\newcommand{\M}{{\mathcal{M}}}
\newcommand{\U}{{\mathcal{U}}}
\newcommand{\X}{{\mathcal{X}}}
\renewcommand{\phi}{\varphi}
\newcommand{\fA}{{\mathfrak{A}}}
\newcommand{\fM}{{\mathfrak{M}}}
\newcommand{\qand}{\quad\text{and}\quad}
\newcommand{\qqand}{\qquad\text{and}\qquad}
 \newcommand{\cconv}{\overline{\mathrm{conv}}}
\newcommand{\conv}{\mathrm{conv}}
\newcommand{\tr}{\mathrm{tr}}
\newcommand{\diag}{\mathrm{diag}}
\newcommand{\pt}{\mathrm{pt}}
\author{Xavier Mootoo}\address[Mootoo]
{Department of Mathematics and Statistics\\ York  University\\ Toronto, Ontario M3J 1P3\\ CANADA}
\email{xmootoo@gmail.com}
\author{Paul Skoufranis}\address[Skoufranis]
{Department of Mathematics and Statistics\\ York  University\\ Toronto, Ontario M3J 1P3\\ CANADA}
\email{pskoufra@yorku.ca}
\subjclass[2010]{47C15, 47B15, 15A42}
\thanks{Research of the first author was supported by an NSERC (Canada) Undergraduate Student Research Award.}
\thanks{Research of the second author was supported by NSERC (Canada) grant RGPIN-2017-05711.}
\title[Joint Majorization in Continuous Matrix Algebras]{Joint Majorization in Continuous Matrix Algebras}
\keywords{Convex Hull of Unitary Orbits; Joint Majorization; Continuous Matrix Algebras}
\begin{document}

\begin{abstract}
Various notions of joint majorization are examined in continuous matrix algebras. The relative strengths of these notions are established via proofs and examples.  In addition, the closed convex hulls of joint unitary orbits are completely characterized in continuous matrix algebras via notions of joint majorization.  Some of these characterizations are extended to subhomogeneous C$^*$-algebras.
\end{abstract}

\maketitle

\section{Introduction} 

The notion of majorization of one self-adjoint $n \times n$ matrix by another appears in many different results in mathematics.  For example, the classical theorem of Schur and Horn \cites{S1923, H1954} states that a diagonal matrix $D$ is majorized by a self-adjoint matrix $B$ if and only if a unitary conjugate of $B$ has the same diagonal as $D$.  Many different characterizations of majorization of one self-adjoint matrix $A$ by another $B$ can be found in \cite{A1989}.  Some examples include $A$ being in the convex hull of the unitary orbit of $B$, the eigenvalues of $A$ being controlled by the eigenvalues of $B$ via certain inequalities, tracial inequalities between functions of $A$ and $B$, and doubly stochastic matrices relating the eigenvalues of $A$ and $B$. Recently, the notion of majorization and its connection to closed convex hulls of unitary orbits was extended to all C$^*$-algebras in  \cite{ng2018majorization} using tracial weights.  

Majorization has more uses in mathematics than can be listed here.  For example, the recent book on quantum information theory \cite{W2018} devotes an entire chapter to majorization of self-adjoint matrices.  In particular, a self-adjoint matrix $A$ is majorized by a self-adjoint matrix $B$ if and only if there is a mixed-unitary quantum channel that maps $B$ to $A$.  Furthermore, \cite{N1999} describes the necessary and sufficient conditions for the existence of entanglement transformations in terms of majorization, and in \cite{N2000} majorization plays a role in characterizing the class of probability distributions appearing in the decomposition of a density matrix.

A ``multivariate majorization'' often called \emph{joint majorization} occurs by generalizing the notion of majorization from self-adjoint matrices to tuples of commuting self-adjoint matrices.  Within the context of quantum information theory, joint majorization determines the possible values when commuting self-adjoint matrices are sent through the same quantum channel.  Most of the characterizations of majorization from \cite{A1989} can be extended to the joint majorization setting and can be found in \cite{peria2005weak}.  It is worthwhile to note that joint majorization also plays a role in geometry and statistics \cite{KR1983} and in characterizing various classes of economic disparity indices.  Joint majorization has also been studied in the context of II$_1$ factors in  \cite{AM2008}.

In this paper, we will examine joint majorization in the setting of continuous matrix algebras.  Determining which notions of matrix majorization extend continuously could be useful in applications where the continuous evolution of majorization is of interest. Including this introduction, this paper has 7 sections summarized as follows.

In Section \ref{sec:defn}, various notions of joint majorization are considered in continuous matrix algebras.  We investigate all possible extensions of known characterizations of joint majorization in this setting and show the general order of strength of these characterizations.  Section \ref{sec:counter} demonstrates many of these characterizations are of distinct strength through some enlightening examples.

In Section \ref{sec:finite-dimensional}, joint majorization is examined in  finite dimensional C$^*$-algebras.  This is a stepping stone to proving a main theorem in Section \ref{sec:main} which proves the equivalence of several forms of joint majorization in any continuous matrix algebra (the notation of this theorem is explained in Section \ref{sec:defn}).

\begin{thm} \label{main_theorem}
Let $\X$ be a compact metric space and let $\vec{A} = (A_1, \dots, A_m)$ and $\vec{B} = (B_1, \dots, B_m)$ be abelian families in $C(\X, M_n(\bC))$. Then the following are equivalent:
\begin{enumerate}
\item $\vec{A} \prec_{\pt} \vec{B}$.
\item $\vec{A} \prec_{\tr} \vec{B}$
\item $\vec{A} \prec_{\tr, \pt} \vec{B}$
\item $\vec{A} \in \cconv(\U(\vec{B}))$.
\end{enumerate}
\end{thm}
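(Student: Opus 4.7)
The plan is to close the cycle of four equivalences, relying on Section~\ref{sec:defn} and Section~\ref{sec:finite-dimensional} for the easy directions and reserving the main work for $(1)\Rightarrow(4)$. The implication $(4)\Rightarrow(1)$ follows by evaluating at each $x\in\X$ a net in $\conv(\U(\vec{B}))$ that approximates $\vec{A}$, using that pointwise evaluation carries $\cconv(\U(\vec{B}))$ into $\cconv$ of the joint unitary orbit of $\vec{B}(x)$ inside $M_n(\bC)^m$, and then invoking the finite-dimensional (joint Hardy--Littlewood--Polya type) characterization of that orbit's closed convex hull. The equivalence $(1)\Leftrightarrow(3)$ is purely pointwise and reduces at each $x\in\X$ to the finite-dimensional equivalence of joint majorization with joint tracial majorization from Section~\ref{sec:finite-dimensional}. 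The pair $(2)\Leftrightarrow(3)$ comes from integrating pointwise tracial inequalities against Radon measures on $\X$ and, in the reverse direction, specializing the global tracial inequality by approximating point masses --- I expect most of this to already be present in the hierarchy of Section~\ref{sec:defn}.

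For $(1)\Rightarrow(4)$, the plan is a partition-of-unity argument bootstrapped from the finite-dimensional case. Fix $\ep>0$. By the finite-dimensional version of the theorem applied pointwise, for every $x_0\in\X$ there exist scalars $\lambda_1^{x_0},\ldots,\lambda_{k_0}^{x_0}\geq 0$ summing to $1$ and constant unitaries $U_1^{x_0},\ldots,U_{k_0}^{x_0}\in M_n(\bC)$ with
\[
\vec{A}(x_0)\;=\;\sum_{i=1}^{k_0}\lambda_i^{x_0}\,(U_i^{x_0})^*\vec{B}(x_0)\,U_i^{x_0}.
\]
Joint uniform continuity of $\vec{A}$ and $\vec{B}$ on the compact space $\X$ extends this identity to an $\ep$-approximation on some open neighborhood $W_{x_0}$ of $x_0$. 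Extracting a finite subcover $W_{x_1},\ldots,W_{x_N}$ with subordinate partition of unity $\{f_j\}_{j=1}^N$, the function
\[
\vec{C}(x)\;=\;\sum_{j=1}^{N}\sum_{i=1}^{k_j} f_j(x)\,\lambda_i^{x_j}\,(U_i^{x_j})^*\vec{B}(x)\,U_i^{x_j}
\]
should satisfy $\|\vec{A}-\vec{C}\|_{\infty}<\ep$ by a routine telescoping estimate.

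The main obstacle is that $\vec{C}$ is a $C(\X)_+$-coefficient combination of elements of $\U(\vec{B})$ rather than a genuine scalar convex combination, so a priori $\vec{C}\notin\conv(\U(\vec{B}))$. I see two plausible routes past this. The first is further discretization: on a sufficiently fine $\eta$-net $\{y_1,\ldots,y_M\}\subset\X$, use path-connectedness of the unitary group $U(n)$ to assemble continuous unitaries $V_{j,i}\in C(\X,U(n))$ that realize $(U_i^{x_j})^*\vec{B}(x)U_i^{x_j}$ near the appropriate net point and that are paired with scalar weights $\mu_{j,i}$ approximating the sampled values of $f_j\,\lambda_i^{x_j}$, so that $\sum_{j,i}\mu_{j,i}\,V_{j,i}^*\vec{B}\,V_{j,i}$ lies within $\ep$ of $\vec{C}$ and hence within $2\ep$ of $\vec{A}$. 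The second, cleaner route bypasses the construction entirely via Hahn--Banach separation: if $\vec{A}\notin\cconv(\U(\vec{B}))$, a separating continuous linear functional on $C(\X,M_n(\bC))^m$, represented through matrix-valued Borel measures on $\X$, would convert under a fiberwise Hardy--Littlewood--Polya duality into an integrated tracial inequality violating $(2)$, closing the cycle as $(2)\Rightarrow(4)$ and avoiding the delicate continuous-unitary construction altogether.
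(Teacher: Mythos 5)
Your treatment of the easy implications is fine and essentially matches Proposition \ref{prop:elementary-equivalences}: (4) $\Rightarrow$ (1) by evaluation plus Theorem \ref{thm:matrix-only-result}, (1) $\Leftrightarrow$ (3) pointwise, and (2) $\Leftrightarrow$ (3) via the point-evaluation traces and Krein--Milman. The problem is that the implication carrying all the content, (1) $\Rightarrow$ (4), is not actually proved. Your partition-of-unity element $\vec{C}$ is indeed within $\ep$ of $\vec{A}$, but it is only a $C(\X)_+$-coefficient combination, and neither proposed repair closes the gap. In Route 1 the scalar weights $\mu_{j,i}$ are constant while the true weights $f_j(x)\lambda_i^{x_j}$ vary with $x$, so on the transition regions the unitaries must absorb the change of weights; path-connectedness of $U(n)$ only supplies paths $t \mapsto V(t)$, and along such a path $V(t)^*\vec{B}(x)V(t)$ runs through arbitrary conjugates, so nothing keeps the combination near $\vec{A}(x)$ while you interpolate. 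This continuous redistribution-of-weights problem is precisely the phenomenon behind Proposition \ref{prop:counter-birkhoff-von-neumann} and Corollary \ref{cor:cont-major-does-not-imply-exact-convex-hull} (there is no continuous Birkhoff--von Neumann theorem), which is why the paper avoids any direct continuous construction of unitaries.

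Route 2 is closer in spirit to what the paper actually does, but as stated it contains a wrong step: a functional separating $\vec{A}$ from $\cconv(\U(\vec{B}))$ in $C(\X,\M_n)^m$ is an $m$-tuple of matrix-valued measures, not a tracial state, so its existence does not directly ``violate (2)''; tracial majorization only tests functionals of the form (positive scalar measure) $\circ\, \tr$. What (1) gives you fiberwise, writing the separating functional via densities $D_j$ against a dominating positive measure, is $\mathrm{Re}\sum_j \tr(A_j(x)D_j(x)) \leq \sup_{u \in U(n)} \mathrm{Re}\sum_j \tr(u^*B_j(x)u\,D_j(x))$, and the missing, essential step is to pass from the integral of these fiberwise suprema --- achieved only by a measurable selection of unitaries --- back to a supremum over \emph{continuous} unitary fields. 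That interchange is exactly what the paper supplies: it moves to the bidual $L_\infty(\Omega,\mu)\otimes\M_n$, where measurable step-function unitaries are legitimate elements (constructed from a measurable partition on which the simultaneously diagonalized data is nearly constant, with a common probability vector via Lemma \ref{finite_algo}), and then returns to $C(\X,\M_n)$ via Kaplansky density and a Hahn--Banach argument (Lemma \ref{lem:suffices-to-consider-VN-alg}). Until you either prove such a measurable-to-continuous approximation statement or reproduce something like Lemma \ref{lem:suffices-to-consider-VN-alg}, the implication (1) $\Rightarrow$ (4) remains open in your write-up.
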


In Section \ref{sec:subhomo}, Theorem \ref{main_theorem} is extended to characterize closed convex hulls of unitary orbits in any subhomogeneous C$^*$-algebra.  Although this result subsumes a good portion of Theorem \ref{main_theorem}, we present the material as such as there are alternate characterizations of majorization in continuous matrix algebras, many readers may only be interested in Theorem \ref{main_theorem}, and the proof of Theorem \ref{main_theorem} is essential to the proofs in Section \ref{sec:subhomo}.  Section \ref{sec:bonus} concludes with some open questions and interesting examples.

\section{Definitions and Notation in Majorization}
\label{sec:defn}

In this section we will begin with a preliminary analysis of the objects of study in this paper and various possible definitions of majorization for $m$-tuples of operators will be introduced and discussed.  In particular, this section will be completed with Proposition \ref{prop:elementary-equivalences} characterizing which properties are immediately implied by other properties.  

In this section and throughout the paper,  $\M_n$ will denote the $n \times n$ matrices with complex entries, $\D_n$ will denoted the diagonal $n \times n$ matrices, $\diag(a_1, \ldots, a_n)$ will denote the $n \times n$ matrix with $a_1,\ldots, a_n$ appearing along the diagonal, $\tr$ will denote the normalized trace on $\M_n$, $\M_{n \times m}$ will denote the $n \times m$ matrices with complex entries, $A^T$ will denote the transpose of a matrix $A$, $C(\X, \fA)$ will denote the continuous functions from a Hausdorff space $\X$ to a C$^*$-algebra $\fA$, and $C_b(\X, \fA)$ will denote the bounded elements of $C(\X, \fA)$. Moreover, given operators $A_1, \ldots, A_m$ in a C$^*$-algebra $\fA$, $C^*(A_1, \ldots, A_m)$ will be used to denote the C$^*$-subalgebra generated by $A_1, \ldots, A_m$, and all norms will be the operator norm unless otherwise specified.   Furthermore, to simplify many discussions in the paper, we make the following definitions.

\begin{defn} \label{prob_vec}
A vector $\vec{t} = (t_1,\ldots, t_n) \in \bR^n$ is said to be a \emph{probability vector} if $t_i \geq 0$ for all $1 \leq i \leq n$ and $\sum_{i = 1}^n t_i =1$.
\end{defn}

The main objects of study in this paper revolve around understanding the following objects in C$^*$-algebras.
 
\begin{defn} \label{unit_orbit}
Let $\fA$ be a unital C$^*$-algebra.  An $m$-tuple $\vec{A} = (A_1, \ldots, A_m) \in \fA^m$ is said to be an \emph{abelian family} if $A_k$ is self-adjoint and $A_k A_j = A_j A_k$ for all $1\leq j,k \leq m$.  The \emph{(joint) unitary orbit of} $\vec{A}$ is
\begin{align*}
    \mathcal{U}(\vec{A}) = \{ (U^* A_1U, \ldots, U^*A_mU) \, \mid \, U \in \fA \text{ is unitary}\} \subseteq \fA^m.
\end{align*}
The \emph{convex hull of  $\mathcal{U}(\vec{A})$} will be denoted by $\conv( \mathcal{U}(\vec{A}))$.  In particular
\[
\conv( \mathcal{U}(\vec{A})) = \left\{\left. \sum^k_{i=1} t_i \vec{C}_i \, \right| \, k \in \bN, \{\vec{C}_i\}^k_{i=1} \subseteq \mathcal{U}(\vec{A}), \vec{t} \in \bR^k \text{ a probability vector}  \right\}.
\]
Furthermore, $\cconv( \mathcal{U}(\vec{A}))$ will be used to denote the norm closure of $ \conv(\mathcal{U}(\vec{A}))$ in $\fA^m$.
\end{defn}

The goal of this paper is to attempt to characterize $\conv( \mathcal{U}(\vec{A}))$ and $\cconv( \mathcal{U}(\vec{A}))$ for specific C$^*$-algebras (i.e. the continuous matrix algebras).  Such characterizations are often called joint majorization as they involve one $m$-tuple being `larger' than another $m$-tuple.

Given an abelian family $(A_1,\ldots, A_m)$, $f(A_1,\ldots, A_m)$ is well-defined for any continuous function $f : \bR^m \to \bR$.  Consequently, the following form of joint majorization is well-defined in any C$^*$-algebra.

\begin{defn}
Let $\fA$ be a unital C$^*$-algebra.  Recall a \emph{tracial state on $\fA$} is a unital positive linear functional $\tau : \fA \to \bC$ such that $\tau(AB) = \tau(BA)$ for all $A, B \in \fA$.

Let $\vec{A} = (A_1, \ldots, A_m)$ and $\vec{B} = (B_1, \ldots, B_m)$ be abelian families in $\fA$.  It is said that $\vec{A}$ is \emph{tracially majorized} by $\vec{B}$, denoted $\vec{A} \prec_{\tr} \vec{B}$, if for every tracial state $\tau : \fA \to \bC$ and every continuous convex function $f : \bR^m \to \bR$ we have that
\[
\tau(f(A_1,\ldots, A_m)) \leq \tau(f(B_1,\ldots, B_m)).
\]
\end{defn}

It is important to note for future use that the set of tracial states on a C$^*$-algebra is a convex, weak$^*$-compact set.  

Although tracial majorization is well-defined in any C$^*$-algebra, there are other characterizations of joint majorization for matrix algebras that rely on the following collection of matrices that appear often in statistics.

\begin{defn} \label{ds_n}
Let $X = [a_{i,j}] \in \M_n$.
It is said that $X$ is \emph{doubly stochastic} if
\begin{enumerate}
    \item $a_{i,j} \geq 0$ for all $1 \leq i,j \leq n$,  
    \item $\sum_{i = 1}^n a_{i,j} = 1$ for all $1 \leq j \leq n$, and 
    \item $\sum_{j = 1}^n a_{i,j} = 1$ for all $1 \leq i \leq n$. 
\end{enumerate}
The set of $n \times n$ doubly stochastic matrices will be denoted by $DS_n$.

It is said that $X$ is \emph{unistochastic} if there exists an $n \times n$ unitary matrix $[u_{ij}]$ such that $a_{ij} = |u_{ij}|^2$ for all $1 \leq i,j \leq n$. The set of unistochastic matrices will be denoted by $US_n$.
\end{defn}
It is not difficult to verify that $DS_n$ is a compact, convex subset of $\M_n$ and that $US_n \subseteq DS_n$ for all $n \in \bN$.  However, it is well-known that not every doubly stochastic matrix is unistochastic.  In particular, it is not difficult to verify that
\[
\frac{1}{2} \begin{bmatrix}
1 & 0 & 1 \\
1 & 1 & 0 \\
0 & 1 & 1
\end{bmatrix}
\]
is doubly stochastic but not unistochastic (see Proposition \ref{prop:counter-birkhoff-von-neumann} for a more general argument).

Using doubly stochastic matrices, we arrive at the following notion of joint majorization for matrices.

\begin{defn}[\cite{KR1983}]  \label{joint_maj}
Let $\vec{A} = (A_1, \dots, A_m)$ and $\vec{B} = (B_1, \dots, B_m)$ be abelian families in $\M_n$.  Thus $\{A_1,\ldots, A_m\}$ and $\{B_1, \ldots, B_m\}$ are simultaneously diagonalizable sets of self-adjoint matrices.  So for $1 \leq j \leq m$ there exists diagonal matrices $D_j$ and $D'_j$ and unitaries $U, V \in \M_n$ such that
\[
A_j = U^* D_j U \qand B_j = V^* D'_j V
\]
for all $1 \leq j \leq m$.  

For all $1\leq j \leq m$ let  $\vec{\lambda}(A_j)$ and $\vec{\lambda}(B_j)$ denote the eigenvalue vectors for $A_j$ and $B_j$ as they appear along the diagonals of $D_j$ and $D'_j$ respectively, and let $A$ and $B$ be the $n \times m$ matrices whose $j^{\mathrm{th}}$ columns are $\vec{\lambda}(A_j)$ and $\vec{\lambda}(B_j)$ respectively for $1\leq j \leq m$.  It is said that $\vec{A}$ is \emph{(doubly stochastic) majorized} by $\vec{B}$, denoted $\vec{A} \prec \vec{B}$, if there exists a $X \in DS_n$ such that $XB=A$.
\end{defn}

The connection between the tracial and doubly stochastic versions of joint majorization and convex hulls of joint unitary orbits in matrix algebras is known.

\begin{thm}[\cite{D1999}*{Corollary 3.5}, \cite{KR1983}, \cite{peria2005weak}*{Proposition 4.2, Theorem 4.5, Proposition 4.9}]
\label{thm:matrix-only-result}
Let $\vec{A} = (A_1, \dots, A_m)$ and $\vec{B} = (B_1, \dots, B_m)$ be abelian families in $\M_n$.   The following are equivalent:
\begin{enumerate}
\item $\vec{A} \prec \vec{B}$.
\item $\vec{A} \prec_{\tr} \vec{B}$.
\item There exists a unital, trace-preserving, (completely) positive map 
\[
\Phi : C^*(A_1, \ldots, A_m) \to C^*(B_1, \ldots, B_m)
\]
such that $\Phi(A_k) = B_k$ for all $1 \leq k \leq m$.  
\item $\vec{A} \in \conv(\U(\vec{B}))$.
\item $\vec{A} \in \cconv(\mathcal{U}(\vec{B}))$.
\end{enumerate}
\end{thm}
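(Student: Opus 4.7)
The plan is to establish the equivalences via the scheme $(1) \Leftrightarrow (3)$, $(3) \Leftrightarrow (2)$, $(4) \Rightarrow (1)$, and finally $(1) \Rightarrow (4)$. The first three directions are routine consequences of Theorem \ref{thm:matrix-only-result} applied fibrewise, together with the structure of tracial states on $C(\X, \M_n(\bC))$; only the last requires substantial new input, and it draws on the finite-dimensional results of Section \ref{sec:finite-dimensional}.

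For $(1) \Leftrightarrow (3)$, both $\vec{A} \prec_{\pt} \vec{B}$ and $\vec{A} \prec_{\tr, \pt} \vec{B}$ are pointwise statements about majorization of abelian families in the fibre $\M_n(\bC)$, and Theorem \ref{thm:matrix-only-result} identifies doubly stochastic and tracial majorization in any single matrix algebra. For $(3) \Leftrightarrow (2)$, one uses that $\M_n(\bC)$ carries a unique tracial state, so every tracial state on $C(\X, \M_n(\bC))$ has the form $\tau_\mu(f) = \int_\X \tr(f(x))\, d\mu(x)$ for some Borel probability measure $\mu$ on $\X$. Integrating the pointwise inequality $\tr(f(\vec{A}(x))) \leq \tr(f(\vec{B}(x)))$ against $\mu$ derives $(2)$ from $(3)$, while evaluating $(2)$ at Dirac-mass tracial states recovers $(3)$. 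For $(4) \Rightarrow (1)$, point evaluation $\mathrm{ev}_x : C(\X, \M_n(\bC)) \to \M_n(\bC)$ is a continuous unital $*$-homomorphism and hence sends $\cconv(\U(\vec{B}))$ into $\cconv(\U(\vec{B}(x)))$, so Theorem \ref{thm:matrix-only-result} yields $\vec{A}(x) \prec \vec{B}(x)$ at each $x \in \X$.

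The main obstacle is $(1) \Rightarrow (4)$. Given $\epsilon > 0$, the goal is to exhibit unitaries $U_1, \ldots, U_N \in C(\X, \M_n(\bC))$ and a probability vector $(t_\ell)_{\ell=1}^N$ with $\|\vec{A} - \sum_{\ell=1}^N t_\ell U_\ell^* \vec{B} U_\ell\| < \epsilon$. Using uniform continuity of $\vec{A}$ and $\vec{B}$ on the compact metric space $\X$, I would choose a finite open cover $\{W_j\}_{j=1}^k$ of $\X$ on which each of $\vec{A}$ and $\vec{B}$ oscillates by less than a small multiple of $\epsilon$, select base points $x_j \in W_j$, and fix a continuous partition of unity subordinate to this cover. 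Applying the finite-dimensional version of the theorem from Section \ref{sec:finite-dimensional} inside the direct sum $\bigoplus_{j=1}^k \M_n(\bC)$ to $(\vec{A}(x_j))_j$ and $(\vec{B}(x_j))_j$, which still satisfy joint majorization coordinatewise, produces a single convex combination of joint unitary conjugates realizing the majorization with a common set of weights across every base point. The essential step, and the main technical difficulty, is then to lift each block-unitary from this finite-dimensional convex combination to a genuine unitary in $C(\X, \M_n(\bC))$ by exploiting path-connectedness of the unitary group of $\M_n(\bC)$ to interpolate continuously across overlaps in the cover, with the resulting perturbation of the convex combination controlled by the oscillation bound. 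The reason the finite-dimensional reduction is essential here is that unitaries are not closed under convex combinations, so one cannot simply interpolate convex combinations; the reduction delivers common weights so that one need only interpolate the individual unitaries and then control the aggregate perturbation.
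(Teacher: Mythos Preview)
You are not proving the stated theorem. The statement you were given is Theorem~\ref{thm:matrix-only-result}, which concerns abelian families in the single matrix algebra $\M_n$ and whose five conditions are $\vec{A}\prec\vec{B}$, $\vec{A}\prec_{\tr}\vec{B}$, the existence of a unital trace-preserving positive map, $\vec{A}\in\conv(\U(\vec{B}))$, and $\vec{A}\in\cconv(\U(\vec{B}))$. Your proposal instead addresses Theorem~\ref{main_theorem}: you work in $C(\X,\M_n(\bC))$, your conditions (1)--(4) are $\prec_{\pt}$, $\prec_{\tr}$, $\prec_{\tr,\pt}$, and $\cconv(\U(\vec{B}))$, and you even invoke Theorem~\ref{thm:matrix-only-result} fibrewise as a known tool. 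In the paper, Theorem~\ref{thm:matrix-only-result} is a cited result; the only part the paper supplies is the equivalence of (5) with the rest, handled inside the proof of $(7)\Rightarrow(2)$ in Proposition~\ref{prop:elementary-equivalences} by a one-line compactness argument: from an approximating sequence of convex combinations one extracts doubly stochastic matrices $X_k$, and compactness of $DS_n$ yields a limit $X$ with $XB=A$.

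Treating your write-up as an attempt at Theorem~\ref{main_theorem} instead, your route diverges from the paper's and leaves the hard step unresolved. The paper proves $(1)\Rightarrow(4)$ by passing to the enveloping von Neumann algebra $\fM\otimes\M_n\cong L_\infty(\Omega,\mu)\otimes\M_n$: Lemma~\ref{lem:suffices-to-consider-VN-alg} (a Kaplansky density / Hahn--Banach transfer) reduces the problem to $\fM\otimes\M_n$, where one simultaneously diagonalizes, partitions $\Omega$ into finitely many \emph{measurable} pieces on which the diagonals are nearly constant, and defines the required unitaries as step functions---no continuity or gluing is needed. Your partition-of-unity approach stays inside $C(\X,\M_n)$ and therefore must produce genuinely continuous unitaries. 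The sentence ``lift each block-unitary \ldots\ by exploiting path-connectedness of the unitary group to interpolate continuously across overlaps'' is exactly the gap: on a patch $W_i\cap W_j$ the unitaries $U_\ell^{(i)}$ and $U_\ell^{(j)}$ coming from the two base points are unrelated, and path-connectedness gives you a path between them but no canonical way to build a single $U_\ell\in C(\X,\M_n)$ over a cover with arbitrary nerve while keeping $\sum t_\ell U_\ell^*\vec{B}U_\ell$ within $\epsilon$ of $\vec{A}$ on every overlap simultaneously. The paper's passage to $L_\infty$ is precisely the device that sidesteps this obstruction.
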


The proof that property (5) is equivalent to the other properties in Theorem \ref{thm:matrix-only-result} does not usually appear in the literature.  A proof of its equivalence is contained in the proof that (7) implies (2) in Proposition \ref{prop:elementary-equivalences}.

Before examining the extent to which Theorem \ref{thm:matrix-only-result} generalizes to continuous matrix algebras, one must first examine the possible ways the notions of majorization generalize.  This immediately causes some slight difficulties in describing Definition \ref{joint_maj} in $C(\X, \M_n)$ since although abelian families in von Neumann algebras  are always simultaneously diagonalizable (see \cite{K1984}),  abelian families in $C(\X, \M_n)$ need not be simultaneously diagonalizable  (see \cite{GP1984}).  Assuming our abelian families have the following property, it is then possible to discuss a continuous version of doubly stochastic majorization.

\begin{defn}  
Let $\X$ be a Hausdorff space.  An abelian family $(A_1, \dots, A_m)$ in $C(\X, \M_n)$ is said to be \emph{continuously simultaneously diagonalizable} (CSD) if there exists a unitary $U \in C(\X, \M_n)$ and $D_1, \dots, D_m \in C(\X, \D_n)$ such that
\[
A_i = UD_iU^*
\]
for all $1 \leq i \leq m$.
\end{defn}

\begin{defn}\label{defn:joint-cont-DS-major}
Let $\X$ be a Hausdorff space and let $\vec{A} = (A_1, \dots, A_m)$ and $\vec{B} = (B_1, \dots, B_m)$ be CSD families in $C(\X, \M_n)$. Thus for $1 \leq j \leq m$ there exists $D_j, D'_j \in C(\X, \D_n)$ and unitaries $U, V \in C(\X, \M_n)$ such that
\[
A_j = U^* D_j U \qand B_j = V^* D'_j V
\]
for all $1 \leq j \leq m$.  

For all $1\leq j \leq m$ let $\vec{\lambda}(A_j)$ and $\vec{\lambda}(B_j)$ denote the vectors of continuous functions appearing along the diagonals of $D_j$ and $D'_j$ (in the order that they appear) respectively, and let $A,B \in C(\X, \M_{n \times m})$ be such that the $j^{\mathrm{th}}$ columns of $A$ and $B$ are $\vec{\lambda}(A_j)$ and $\vec{\lambda}(B_j)$ respectively for $1 \leq j \leq m$.  It is said that $\vec{A}$ is \emph{continuously (doubly stochastic) majorized} by $\vec{B}$, denoted $\vec{A} \prec_c \vec{B}$, if there exists a  $X \in C(\X, DS_n)$ such that $XB=A$.
\end{defn}

Of course, a generalization of doubly stochastic majorization that does not require a CSD family can be easily obtained by considering the question of majorization pointwise.

\begin{defn}\label{defn:pt-DS-major}
Let $\X$ be a Hausdorff space and let $\vec{A} = (A_1, \dots, A_m)$ and $\vec{B} = (B_1, \dots, B_m)$ be abelian families in $C(\X, M_n)$.  It is said that $\vec{A}$ is \emph{pointwise (doubly stochastic) majorized} by $\vec{B}$, denoted $\vec{A} \prec_{\pt} \vec{B}$,  if $A(x) \prec B(x)$ for each $x \in \X$.
\end{defn}

While on the topic of pointwise majorization, although the notion of tracial majorization clearly exists in $C(\X, \M_n)$, a pointwise version can easily be discussed.

\begin{defn}
Let $\X$ be a Hausdorff space and let $\vec{A} = (A_1, \dots, A_m)$ and $\vec{B} = (B_1, \dots, B_m)$ be abelian families in $C(\X, M_n)$.  It is said that $\vec{A}$ is \emph{pointwise tracially majorized} by $\vec{B}$, denoted $\vec{A} \prec_{\tr, \pt} \vec{B}$, if $A(x) \prec_{\tr} B(x)$ for each $x \in \X$.
\end{defn}

With the forms of joint majorization in $C(\X, \M_n)$ defined above, certain equivalences from Theorem \ref{thm:matrix-only-result} can be deduced.

\begin{prop}\label{prop:elementary-equivalences}
Let $\X$ be a compact Hausdorff space and let $\vec{A} = (A_1, \dots, A_m)$ and $\vec{B} = (B_1, \dots, B_m)$ be abelian families in $C(\X, M_n)$.  Consider the following properties:
\begin{enumerate}
\item Assuming $\vec{A}$ and $\vec{B}$ are CSD, $\vec{A} \prec_c \vec{B}$.
\item $\vec{A} \prec_{\pt} \vec{B}$.
\item $\vec{A} \prec_{\tr} \vec{B}$.
\item $\vec{A} \prec_{\tr, \pt} \vec{B}$.
\item Assuming $\vec{A}$ and $\vec{B}$ are CSD, for each $x\in \X$ there exists a unital, trace-preserving, (completely) positive map $\Psi_x : \D_n \to \D_n$ such that if
\[
\Psi : C(\X, \D_n) \to C(\X, \D_n)
\]
is defined by
\[
\Psi(T)(x) = \Psi_x(T(x))
\]
for every $T \in C(\X, \D_n)$ and $x \in \X$, then $\Psi$ is well-defined, and there exists unitaries $U, V \in C(\X, \M_n)$ such that 
\[
U C^*(A_1,\ldots, A_m) U^* \subseteq C(\X, \D_n)
\]
and if $\Phi : C^*(A_1,\ldots, A_m) \to C(\X, \M_n)$ is defined by
\[
\Phi(T) = V^*\Psi(UTU^*)V
\]
for all $T \in C^*(A_1,\ldots, A_m)$, then $\Phi(A_k) = B_k$ for all $1 \leq k \leq m$ (and $\Phi$ is a unital (completely) positive map that preserves every trace on $C(\X, \M_n)$).
\item $\vec{A} \in \conv(\U(\vec{B}))$.
\item $\vec{A} \in \cconv(\U(\vec{B}))$.
\end{enumerate}
Then (6) $\implies$ (1)  $\Longleftrightarrow$ (5) $\implies$ (2) and  (6) $\implies$ (7)  $\implies$ (2) $\Longleftrightarrow$ (4) $\Longleftrightarrow$ (3).
\end{prop}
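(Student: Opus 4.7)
The plan is to dispatch all six implications by (a) reducing pointwise statements to the matrix-level equivalences of Theorem \ref{thm:matrix-only-result}, (b) invoking the integral representation of tracial states on $C(\X, \M_n)$, and (c) converting between doubly stochastic matrices and unital trace-preserving positive maps on $\D_n$. The chain (6) $\implies$ (7) is tautological, and (7) $\implies$ (2) comes from applying the evaluation $*$-homomorphism $\mathrm{ev}_x : C(\X, \M_n) \to \M_n$ at each $x \in \X$: since $\mathrm{ev}_x$ is continuous and sends unitaries to unitaries, it sends $\cconv(\U(\vec{B}))$ into the closed convex hull of $\U(\vec{B}(x))$, which coincides with $\conv(\U(\vec{B}(x)))$ because the unitary group of $\M_n$ is compact. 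Theorem \ref{thm:matrix-only-result} then gives $\vec{A}(x) \prec \vec{B}(x)$, and the equivalence (2) $\iff$ (4) is likewise the equivalence $\prec \iff \prec_{\tr}$ of Theorem \ref{thm:matrix-only-result} applied at each $x$.

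For (3) $\iff$ (4), the key input is that every tracial state $\tau$ on $C(\X, \M_n) \cong C(\X) \otimes \M_n$ has the form
\[
\tau(T) = \int_\X \tr(T(x))\, d\mu(x)
\]
for some Borel probability measure $\mu$ on $\X$; this follows from uniqueness of the trace on $\M_n$ applied to matrix-valued constants, together with the state obtained by restricting $\tau$ to $C(\X)$. Then (3) $\implies$ (4) by taking $\mu = \delta_x$ and rescaling (any trace on $\M_n$ is a positive scalar multiple of $\tr$), and (4) $\implies$ (3) by integrating the pointwise inequality $\tr(f(\vec{A}(x))) \leq \tr(f(\vec{B}(x)))$ against $\mu$.

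For (1) $\iff$ (5), fix the diagonalizations $A_j = U^* D_j U$ and $B_j = V^* D'_j V$ given by the CSD hypothesis, with $U, V$ unitaries in $C(\X, \M_n)$ and $D_j, D'_j \in C(\X, \D_n)$. A unital, positive, trace-preserving map $\Psi_x : \D_n \to \D_n$ corresponds bijectively to a doubly stochastic matrix $X(x) \in DS_n$ via $\Psi_x(\diag \vec{a}) = \diag(X(x) \vec{a})$, and well-definedness of $\Psi$ on $C(\X, \D_n)$ is equivalent to continuity of $x \mapsto X(x)$, i.e., $X \in C(\X, DS_n)$. Under the column convention of Definition \ref{defn:joint-cont-DS-major}, the requirement $\Phi(A_k) = B_k$ for $\Phi(T) = V^* \Psi(UTU^*) V$ translates precisely into the identity $XB = A$, proving (1) $\iff$ (5). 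Finally (5) $\implies$ (2) follows by evaluation: at each $x$ one obtains a unital, positive, trace-preserving map on matrices sending $A_k(x)$ to $B_k(x)$, so Theorem \ref{thm:matrix-only-result} yields $\vec{A}(x) \prec \vec{B}(x)$.

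The main obstacle is the careful bookkeeping in (1) $\iff$ (5): one must match the column convention of Definition \ref{defn:joint-cont-DS-major} with the diagonalization conventions in (5), and verify that the doubly stochastic matrix extracted from a continuous family $\{\Psi_x\}_{x\in\X}$ is itself continuous in $x$. The remaining implications are routine consequences of the finite-dimensional case together with the integral representation of traces on $C(\X, \M_n)$.
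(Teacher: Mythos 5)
You never prove (6) $\implies$ (1), which is one of the asserted arrows and does not follow from anything else in your proposal: (7) $\implies$ (2) only yields the pointwise statement, and (1) $\Longleftrightarrow$ (5) presupposes one of (1) or (5). This implication is the one genuinely ``continuous'' construction in the paper's proof: from $A_j = \sum_i t_i U_i^* B_j U_i$ one conjugates into diagonal form to get $D_j = \sum_i t_i W_i^* D'_j W_i$ with $W_i = VU_iU^*$ unitary in $C(\X,\M_n)$, and then takes $X(x)$ to be the convex combination of the unistochastic matrices with entries $|(W_i(x))_{q,p}|^2$; this $X$ lies in $C(\X, DS_n)$ and satisfies $XB=A$. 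That averaging-of-unistochastic-matrices step appears nowhere in your write-up, so the arrow (6) $\implies$ (1) is a genuine gap. A second, smaller issue is in (1) $\Longleftrightarrow$ (5): your claim that $\Phi(A_k)=B_k$ ``translates precisely into $XB=A$'' is not what the printed conventions give. With $U C^*(A_1,\ldots,A_m)U^* \subseteq C(\X,\D_n)$, $A_j = U^*D_jU$ and $B_j = V^*D'_jV$, the identity $\Phi(A_k)=B_k$ unwinds to $\Psi(D_k)=D'_k$, i.e.\ $XA=B$; to match the definition of $\prec_c$ (which demands $XB=A$) the map must carry the diagonalized $B$'s to the diagonalized $A$'s, which is exactly how the paper's own proof reads the statement (it works with $\Psi(D'_j)=D_j$). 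So the direction of your translation needs fixing; the underlying dictionary between $C(\X,DS_n)$ and continuous families of unital trace-preserving positive maps on $\D_n$, including the equivalence of well-definedness of $\Psi$ with continuity of $x\mapsto X(x)$, is otherwise exactly the paper's argument.

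The remaining implications are correct, and in two places you take a genuinely different route from the paper. For (3) $\Longleftrightarrow$ (4) the paper identifies the point evaluations $\tr_x$ as the extreme points of the tracial state space and invokes Krein--Milman in the weak$^*$ topology, whereas you use the integral representation of an arbitrary tracial state against a Borel probability measure; your route requires the short verification that traciality annihilates the off-diagonal matrix units and equalizes the diagonal ones before applying Riesz and density, but it is more explicit and equally valid. For (7) $\implies$ (2) the paper evaluates at a point and extracts a norm-convergent subsequence of doubly stochastic matrices using compactness of $DS_n$; your observation that $\conv(\U(\vec{B}(x)))$ is already closed in $(\M_n)^m$ (compact orbit plus Carath\'eodory) is shorter and independently justifies the equivalence of item (5) of Theorem \ref{thm:matrix-only-result}, which the paper says is only established inside this very proof, so no circularity arises.
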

\begin{proof}
Clearly (6) $\implies$ (7) and clearly (1) $\implies$ (2).  Moreover (2) $\Longleftrightarrow$ (4) by Theorem \ref{thm:matrix-only-result}.  

\begin{proof}[Proof of (4) $\Longleftrightarrow$ (3)]
Since for each $x \in \X$ the map $\tr_x : C(\X, \M_n) \to \bC$ defined by
\[
\tr_x(T) = \tr(T(x))
\]
is a tracial state on $C(\X, \M_n)$, it is trivial to see that (3) implies (4).  Moreover, since $\M_n$ has exactly one trace, $\{\tr_x \, \mid \, x \in X\}$ are the extreme points of the set of all tracial states on $C(\X, \M_n)$ when $\X$ is a normal topological space.  Consequently, by considering the weak$^*$ topology on the tracial states, it easily follows from the Krein-Milman Theorem that (4) implies (3).
\end{proof}

\begin{proof}[Proof of (5) $\implies$ (1)]
Let $U$, $V$, $\Psi_x$ for $x \in \X$, $\Psi$, and $\Phi$ be as in the statement of (5).  By the assumption of (5), $\vec{B}$ is simultaneously diagonalizable via $V$.  Let $A$, $B$, $D_j$, $D'_j$, $\vec{\lambda}(A_j)$, and $\vec{\lambda}(B_j)$ for $1\leq j \leq m$ be as in Definition \ref{defn:joint-cont-DS-major}.  Hence $\Psi(D'_j) = D_j$ for all $1 \leq j \leq m$.  

For each $1 \leq i \leq n$ let $E_{i,i} \in C(\X, \D_n)$ be the matrix with constant function 1 in the $(i,i)$-entry and zero everywhere else and let $X : \X \to \M_n$ be such that the $j^{\mathrm{th}}$ column of $X$ is the diagonal entries in the order they appear of $\Psi(E_{i,i})$ for all $1\leq i \leq n$.  Hence $X \in C(\X, \M_n)$.  

To see that $X \in C(\X, DS_n)$, first notice that since $E_{i,i}$ is a positive operator for all $1\leq i \leq m$, $\Psi(E_{i,i})$ is positive and thus has non-negative diagonal entries.  Hence the entries of $X(x)$ are non-negative for all $x \in \X$.  Moreover, since 
\[
\tr_x(\Psi(E_{i,i})) = \tr(\Psi_x(E_{i,i}(x))) = \tr(E_{i,i}(x))
\]
for all $x \in \X$, we see summing the entries of each column of $X(x)$ yields 1 for all $x \in \X$.  Finally, since $\Psi_x$ is unital for all $x \in \X$,
\[
I_n = \Psi_x\left(\sum^n_{i=1} E_{i,i}(x)  \right) =\sum^n_{i=1}  \Psi_x\left(E_{i,i}(x)  \right)
\]
and thus we see summing the entries of each row of $X(x)$ yields 1 for all $x \in X$.  Hence $X \in C(\X, DS_n)$.

Since $\Psi_x(D'_j(x)) = D_j(x)$ for all $x \in \X$, we immediately see that $X(x) B(x) = A(x)$ for all $x \in \X$. Hence $XB = A$ so $\vec{A} \prec_c \vec{B}$.  
\end{proof}

\begin{proof}[Proof of (1) $\implies$ (5)]
Assume $\vec{A}$ and $\vec{B}$ are CSD and $\vec{A} \prec_c \vec{B}$.  Let $X \in C(\X, DS_n)$, $U$, $V$, $A$, $B$, $D_j$, $D'_j$, $\vec{\lambda}(A_j)$, and $\vec{\lambda}(B_j)$ for $1\leq j \leq m$ be as in Definition \ref{defn:joint-cont-DS-major}.  To prove (5), it suffices for each $x \in \X$ to construct a unital, trace-preserving, (completely) positive map $\Psi_x : \D_n \to \D_n$ such that $\Psi_x(D'_j(x)) = D_j(x)$ for all $1 \leq j \leq m$ and 
\[
x \mapsto \Psi_x(T(x))
\]
is continuous for all $T \in C(\X, \D_n)$.  

Using the same notation as in (5) $\implies$ (1), define $\Psi_x : \D_n \to \D_n$ so that $\Psi_x(E_{i,i}(x))$ is the diagonal matrix whose entries are the $i^{\mathrm{th}}$ column of $X(x)$ for all $x \in \X$.  By similar arguments to those used in (5) $\implies$ (1), $\Psi_x$ is unital, trace-preserving, and positive for all $x \in \X$ (and automatically completely positive having $\D_n$ as a co-domain).  Finally, if $T \in C(\X, \D_n)$ is such that $T = \diag(f_1, \ldots, f_n)$ with $f_1, \ldots, f_n \in C(\X)$, then $\Psi(T) = \diag(g_1, \ldots, g_n)$ where $\vec{g} = X \vec{f}$ (with $\vec{f}$ and $\vec{g}$ column vectors).  Therefore, since $X  \in C(\X, DS_n)$, we see that $g_1, \ldots, g_n \in C(\X)$ so $\Psi(T) \in C(\X, \D_n)$ for all $T \in C(\X, \D_n)$ as desired.
\end{proof}

\begin{proof}[Proof of (6) $\implies$ (1)]
Suppose $\vec{A}$ and $\vec{B}$ are CSD and $\vec{A} \in \conv(\U(\vec{B}))$.  Since $\vec{A}$ and $\vec{B}$ are CSD, for $1 \leq j \leq m$ there exists diagonal matrices $D_j, D'_j \in C(X, \M_n)$ and unitaries $U, V \in C(X, \M_n)$ such that
\[
A_j = U^* D_j U \qand B_j = V^* D'_j V
\]
for all $1 \leq j \leq m$.  Let $A,B \in C(\X, \M_{n \times m})$ be as in Definition \ref{defn:joint-cont-DS-major}.

Since $\vec{A} \in \conv(\U(\vec{B}))$, there exists a $k \in \bN$, unitaries $U_1, \ldots, U_k \in C(\X, \M_n)$, and a probability vector $\vec{t} \in \bR^k$ such that
\[
A_j = \sum^k_{i=1} t_i U^*_i B_j U_i
\]
for all $1 \leq j \leq m$.    Hence
\begin{align}
D_j = \sum^k_{i=1} t_i W^*_i D'_j W_i  \label{eq:diagonal-convex-hull-to-DS}
\end{align}
for all $1 \leq j \leq m$ where $W_i = VU_iU^*$ are unitary for all $1\leq i \leq k$. Let
\[
X = \sum^k_{i=1} t_i W^*_iW_i.
\]
Clearly $X \in C(\X, \M_n)$.  Moreover, since $\vec{t}$ is a probability vector and $W_i$ is a unitary operator for all $1\leq i \leq m$, we see that 
\[
X \in C(\X, \conv(US_n)) \subseteq C(\X, DS_n).
\] Using equation (\ref{eq:diagonal-convex-hull-to-DS}), a simple computation shows that $XB = A$.  Hence $\vec{A} \prec_c \vec{B}$.
\end{proof}

\begin{proof}[Proof of (7) $\implies$ (2)]
Suppose $\vec{A} \in \cconv(\U(\vec{B}))$.  Clearly this implies that 
\[
\vec{A}(x) \in \cconv(\U(\vec{B}(x)))
\]
for all $x \in \X$.  Thus, to prove the validity of (2), it suffices to consider the case when $\X = \{1\}$ so that $C(\X, \M_n) = \M_n$.    In this case $\vec{A}$ and $\vec{B}$ are simultaneously diagonalizable and thus for $1 \leq j \leq m$ there exists diagonal matrices $D_j, D'_j \in  \M_n$ and unitaries $U, V \in \M_n$ such that
\[
A_j = U^* D_j U \qand B_j = V^* D'_j V
\]
for all $1 \leq j \leq m$.   Let $A, B \in \M_{n \times m}$ be as in Definition \ref{joint_maj}.

Since $\vec{A} \in \cconv(\U(\vec{B}))$, there exists a sequence $(\ell_k)_{k\geq 1} \subseteq \bN$, $\ell_k$-tuples of unitaries $U_{1,k}, \ldots, U_{\ell_k,k} \in \M_n$, and probability vectors $\vec{t}_k \in \bR^{\ell_k}$ for all $k$  such that
\[
A_j = \lim_{k \to \infty} \sum^{\ell_k}_{i=1} t_{i,k} U^*_{i,k} B_j U_{i,k}
\]
for all $1 \leq j \leq m$.  Thus
\[
D_j = \lim_{k \to \infty} \sum^{\ell_k}_{i=1} t_{i,k} W^*_{i,k} D'_j W_{i,k}
\]
for all $1 \leq j \leq m$ where $W_{i,k} = VU_{i,k}U^*$ are unitary for all $1\leq i \leq \ell_k$ and $k \in \bN$.  Thus, if
\[
X_k = \sum^{\ell_k}_{i=1} t_{i,k} W^*_{i,k}W_{i,k}
\]
for all $k \in \bN$, then, as in the proof that (5) implies (1), $(X_k)_{k\geq 1}$ is a sequence of doubly stochastic matrices such that
\[
A = \lim_{k \to \infty} X_k B.
\]
Since the set of doubly stochastic matrices is compact, there exists a subsequence of $(X_k)_{k\geq 1}$ that converges to some doubly stochastic matrix $X$.  Hence $A = XB$ so $\vec{A} \prec_{\pt} \vec{B}$. 
\end{proof}
As all of the desired implications have been shown, the proof is complete.
\end{proof}

In Proposition \ref{prop:elementary-equivalences}, the completely positive map form of joint majorization is slightly dis-satisfactory when compared to the completely positive map form of joint majorization in Theorem \ref{thm:matrix-only-result}.  However, as the equivalence of (1) and (5) in Proposition \ref{prop:elementary-equivalences} demonstrates, this appears to be the correct notion.  The completely positive map form of joint majorization in Theorem \ref{thm:matrix-only-result} works due to the fact that $\M_n$ is an injective von Neumann algebra (and $C^*(B_1,\ldots, B_n)$ is an injective von Neumann subalgebra).  A similar completely positive map form of joint majorization in II$_1$ factors was a focus of and achieved in \cite{AM2008}.

\section{Some Examples}
\label{sec:counter}

As Proposition \ref{prop:elementary-equivalences} has demonstrated certain forms of joint majorization are implied by others, it is natural to consider whether the remaining implications hold.  To begin, does $\vec{A} \prec_{\pt} \vec{B}$ imply $\vec{A} \prec_c \vec{B}$?  In Definition \ref{defn:pt-DS-major}, one could imagine that having at each point $x \in \X$ a doubly stochastic matrix $X(x)$ such that $X(x) B(x) =A(x)$ plus the continuity of $A$ and $B$ could imply that $x \mapsto X(x)$ can be chosen continuously.  However, this is not the case.

\begin{prop}\label{prop:point-doesnt-go-to-continuous}
There exists CSD families $A_1$ and $B_1$ in $C([-1,1], \M_2)$ such that $A_1 \prec_{\pt} B_1$ yet $A_1 \nprec_c B_1$.
\end{prop}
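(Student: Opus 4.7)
The plan is to exploit the simplest possible setting $m=1$, $n=2$, $\X = [-1,1]$, and choose $A_1, B_1$ so that the two eigenvalue functions of $B_1$ cross at the origin while the required ``relabelling'' built into the doubly stochastic matrix has to flip from one permutation to the other at that point.

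Explicitly, I would take
\[
B_1(x) = \diag(x,-x) \qand A_1(x) = \diag(|x|,-|x|)
\]
on $\X = [-1,1]$. Both are already diagonal, hence trivially CSD via $U=I$. To verify $A_1 \prec_{\pt} B_1$, note that at every $x \in [-1,1]$ the two matrices $A_1(x)$ and $B_1(x)$ have the same multiset of eigenvalues $\{|x|,-|x|\}$, so they are unitarily equivalent and hence each majorizes the other in $\M_2$ (by Theorem \ref{thm:matrix-only-result}, or directly by using a permutation matrix as the doubly stochastic witness at each point).

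For the non-existence of a continuous doubly stochastic witness, I would use the fact that every $2\times 2$ doubly stochastic matrix is of the form
\[
\begin{bmatrix} t & 1-t \\ 1-t & t \end{bmatrix}, \quad t \in [0,1].
\]
Using the diagonalizations above, the associated column vectors from Definition \ref{defn:joint-cont-DS-major} are $\vec{\lambda}(A_1)(x) = (|x|,-|x|)^T$ and $\vec{\lambda}(B_1)(x) = (x,-x)^T$. If $X \in C(\X, DS_2)$ with $X\vec{\lambda}(B_1) = \vec{\lambda}(A_1)$, write $X(x) = \bigl[\begin{smallmatrix} t(x) & 1-t(x) \\ 1-t(x) & t(x) \end{smallmatrix}\bigr]$ for some continuous $t \colon [-1,1] \to [0,1]$. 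The first coordinate equation reads $(2t(x)-1)x = |x|$, forcing $t(x) = 1$ for $x > 0$ and $t(x) = 0$ for $x < 0$, contradicting continuity at $x=0$.

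The one point to handle carefully is that the matrix $\vec{\lambda}(B_1)$ depends on the chosen diagonalization, and for $B_1$ one could equally well take the diagonal entries in the reverse order (i.e.\ replace $B_1$ by $U^\ast B_1 U$ for $U$ the constant swap permutation). I would simply run the same computation with $\vec{\lambda}(B_1)(x) = (-x,x)^T$; the equation becomes $(1-2t(x))x = |x|$, which again forces a jump of $t$ at the origin. Since swapping the diagonalization of $A_1$ merely multiplies $\vec{\lambda}(A_1)$ by $-1$ and is handled symmetrically, the argument is exhaustive. The main ``obstacle,'' such as it is, is just book-keeping this diagonalization ambiguity; the rest is a one-line calculation.
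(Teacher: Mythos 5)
Your proposal is correct and is essentially the paper's own proof: it uses the same pair $\{\diag(x,-x),\ \diag(|x|,-|x|)\}$ (with the roles of $A_1$ and $B_1$ interchanged, which is immaterial since the two matrices are pointwise unitarily equivalent) and the same parametrization of $DS_2$ by a single continuous function forced to jump at $0$. Your extra remark about the ambiguity of the continuous diagonalization is a careful touch the paper omits, but it does not change the argument.
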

\begin{proof} 
Let $A_1, B_1 \in C([-1,1],\M_2)$ be defined by
\begin{align*}
A_1(x) &= \begin{bmatrix}
x & 0 \\
0 & -x
\end{bmatrix} & B_1(x) &=  \begin{bmatrix}
|x| & 0 \\ 0 & -|x|
\end{bmatrix}.
\end{align*}
Clearly $A_1$ and $B_1$ are CSD families (as they are already diagonal matrices at each point).  Moreover, if $A$ and $B$ are as in Definition \ref{defn:joint-cont-DS-major}, then
\[
A = \begin{bmatrix}
x \\ -x
\end{bmatrix} \qqand B = \begin{bmatrix}
|x| \\ -|x|
\end{bmatrix}
\]
for all $x \in [-1,1]$.  

To see that $A_1 \prec_{\pt} B_1$, define $X:[-1,1] \to \M_2$ by
\[
X(x) = \begin{cases}\begin{bmatrix} 1 & 0 \\ 0 & 1 \end{bmatrix}&\textnormal{if } x \geq 0 \\ \ \\  \begin{bmatrix} 0 & 1 \\ 1 & 0 \end{bmatrix} &\textnormal{ if } x < 0\end{cases}
\]
for all $x \in [-1,1]$. Clearly $X(x) \in DS_2$ for all $x \in [-1,1]$.  Since $X(x)B(x) = A(x)$ for all $x \in [-1,1]$, we have $A_1 \prec_{\pt} B_1$.

To see that $A_1 \nprec_c B_1$, suppose to the contrary that there is a $Y \in C([-1,1], DS_2)$ such that $YB = A$.  Since $Y \in C([-1,1], DS_2)$, we can write
\[
Y(x) = \begin{bmatrix}
f(x) & 1-f(x) \\
1-f(x) & f(x)
\end{bmatrix}
\]
for some $f \in C[-1,1]$.  Since $YB = A$ for all $x > 0$ we have that
\[
\begin{bmatrix}
x \\ -x
\end{bmatrix} = A(x) = Y(x)B(x) = Y(x)\begin{bmatrix}
x \\ -x
\end{bmatrix} = \begin{bmatrix}
2x f(x) - x \\ x - 2xf(x)
\end{bmatrix}
\]
and thus $f(x) = 1$ for all $x > 0$.   Similarly, for all $x < 0$ we have that
\[
\begin{bmatrix}
x \\ -x
\end{bmatrix}= A(x) = Y(x)B(x) = Y(x) \begin{bmatrix}
-x \\ x
\end{bmatrix} =  \begin{bmatrix}
x - 2xf(x) \\ 2x f(x) - x
\end{bmatrix}
\]
and thus $f(x) = 0$ for all $x < 0$.  Since $\lim_{x \to 0+} f(x) \neq \lim_{x \to 0-} f(x)$, $f$ is not continuous at 0.   Hence $A_1 \nprec_c B_1$.
\end{proof}

The next natural question would be, does  $\vec{A}\prec_c \vec{B}$ imply $\vec{A} \in \conv(\U(\vec{B}))$?    Of course, when $n=1$ the result is trivially true as $DS_1 = \{1\}$ so $\vec{A} = \vec{B}$, and when $n = 2$ the proof is as follows.

\begin{prop} \label{M2_thm}
Let $\X$ be a Hausdorff space and let $\vec{A} = (A_1, \dots, A_m)$ and $\vec{B} = (B_1, \dots, B_m)$ be  CSD families in $C(\X, \M_2)$. If $\vec{A} \prec_c \vec{B}$, then $\vec{A} \in \conv(\U(\vec{B}))$.
\end{prop}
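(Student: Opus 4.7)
The idea is to use the very special form of $2 \times 2$ doubly stochastic matrices to realize the continuous convex combination coming from $\vec{A} \prec_c \vec{B}$ as an honest convex combination of two unitary conjugates with constant coefficients $\frac{1}{2}$. Since every element of $DS_2$ has the form $\bigl[\begin{smallmatrix} t & 1-t \\ 1-t & t\end{smallmatrix}\bigr] = tI_2 + (1-t)S$ where $S = \bigl[\begin{smallmatrix} 0 & 1 \\ 1 & 0 \end{smallmatrix}\bigr]$, the matrix $X \in C(\X, DS_2)$ guaranteed by Definition \ref{defn:joint-cont-DS-major} can be written as $X = tI_2 + (1-t)S$ for some continuous function $t:\X \to [0,1]$. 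Writing $A_j = U^* D_j U$ and $B_j = V^* D'_j V$ with $U, V \in C(\X,\M_2)$ unitary and $D_j, D'_j \in C(\X,\D_2)$, the equation $XB = A$ read coordinatewise says precisely that
\[
D_j = t D'_j + (1-t) S D'_j S \qquad \text{for all } 1 \leq j \leq m.
\]

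Next, I would introduce the one-parameter family of rotations $R(\theta) = \bigl[\begin{smallmatrix} \cos\theta & -\sin\theta \\ \sin\theta & \cos\theta \end{smallmatrix}\bigr]$ and verify, by a direct computation valid for \emph{any} diagonal matrix $\diag(a,b)$, that
\[
\tfrac{1}{2}\bigl(R(\theta)^* \diag(a,b) R(\theta) + R(-\theta)^* \diag(a,b) R(-\theta)\bigr) = \diag\bigl(c^2 a + s^2 b,\; s^2 a + c^2 b\bigr),
\]
where $c = \cos\theta$, $s = \sin\theta$; the off-diagonal terms $(b-a)cs$ and $-(b-a)cs$ cancel. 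With $t = \cos^2\theta$, the right-hand side is exactly $t\diag(a,b) + (1-t) S\diag(a,b) S$. Setting $\theta(x) = \arccos\bigl(\sqrt{t(x)}\bigr)$ gives a continuous function $\theta : \X \to [0, \pi/2]$ (continuity follows since $t$ is continuous and $\arccos \circ \sqrt{\cdot}$ is continuous on $[0,1]$), so that $R(\pm\theta) \in C(\X, \M_2)$ are unitary.

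Finally, I would define $Z_1 = V^* R(\theta) U$ and $Z_2 = V^* R(-\theta) U$, which are unitaries in $C(\X, \M_2)$ independent of $j$. Applying $U^*(\cdot)U$ to the identity above (with $a, b$ replaced by the diagonal entries of $D'_j$) and using $D'_j = V B_j V^*$, a short rearrangement gives
\[
A_j = U^* D_j U = \tfrac{1}{2} Z_1^* B_j Z_1 + \tfrac{1}{2} Z_2^* B_j Z_2 \qquad \text{for all } 1 \leq j \leq m,
\]
so $\vec{A} \in \conv(\U(\vec{B}))$.

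\textbf{Main obstacle.} The heart of the argument is showing that the pointwise doubly stochastic action can be implemented by \emph{constant-coefficient} unitary conjugations with unitaries from $C(\X, \M_2)$; the functional weight $t(x)$ must be absorbed entirely into the unitaries. The rotation-by-$\pm\theta$ trick succeeds in $\M_2$ because $DS_2$ is one-dimensional and every element is unistochastic in a parametrically continuous way. This is also exactly the step that should fail in $\M_n$ for $n \geq 3$, where doubly stochastic matrices need not be unistochastic (and even when they are, continuous unistochastic lifts need not exist), which explains why the present proposition is stated only for $n=2$.
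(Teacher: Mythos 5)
Your proposal is correct and follows essentially the same route as the paper: write the continuous $2\times 2$ doubly stochastic matrix as $\cos^2\theta,\sin^2\theta$ with $\theta=\arccos\sqrt{t}$ continuous, average the two rotation conjugations $R(\pm\theta)$ with constant weights $\tfrac12$, and transport back through the diagonalizing unitaries $U,V$. The only differences are cosmetic (your $tI_2+(1-t)S$ decomposition versus the paper's direct parametrization by $f$), so no further comparison is needed.
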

\begin{proof} 
Suppose $\vec{A} \prec_c \vec{B}$ and let $U$, $V$, $X$, $D_j$, and $D'_j$ for $1\leq j \leq m$ be as in Definition \ref{defn:joint-cont-DS-major}. Since $X \in C(\X, DS_2)$, there exists a $f \in C(\X, [0,1])$ such that
\[
X(x) = \begin{bmatrix} f(x) & 1-f(x) \\ 1-f(x) & f(x) \end{bmatrix}
\]
for all $x \in \X$. Define $\theta\in C(\X)$ by $\theta(x) = \arccos(\sqrt{f(x)})$ for all $x \in \X$. Thus
\[
X(x) = \begin{bmatrix} \cos^2 (\theta(x)) & \sin^2 (\theta(x))\\ \sin^2(\theta(x)) & \cos^2 (\theta(x)) \end{bmatrix}
\]
for all $x \in \X$. Define $W_1, W_2 \in C(\X, \M_2)$ by
\begin{align*}
 W_1(x) &= \begin{bmatrix} \cos(\theta(x)) & \sin(\theta(x)) \\ - \sin(\theta(x)) & \cos(\theta(x)) \end{bmatrix}, \\ W_2(x) &= \begin{bmatrix} \cos(\theta(x)) & -\sin(\theta(x)) \\  \sin(\theta(x)) & \cos(\theta(x)) \end{bmatrix},
\end{align*}
for all $x \in \X$.  Clearly $W_1$ and $W_2$ are unitary operators in $C(\X, \M_2)$.  As it is readily verified that if $D= \diag(c_1, c_2)$ then
\[
\frac{1}{2}  W_1(x)^* D  W_1(x) + \frac{1}{2} W_2(x)^* D W_2(x) = \diag(d_1, d_2)
\]
where $(d_1, d_2)^T = X(x)(c_1, c_2)^T$, it follows that
\[
\frac{1}{2}  W_1^* D'_j  W_1 + \frac{1}{2} W_2 ^* D'_j W_2  = D_j
\]
for all $1\leq j \leq m$.  Hence
\[
\frac{1}{2} (VW_1U^*)^* B_j  (VW_1U^*) + \frac{1}{2} (VW_2U^*) ^*B_j (VW_1U^*) = A_j
\]
for all $1 \leq j \leq m$. Thus $\vec{A} \in \conv(\U(\vec{B}))$. 
\end{proof}

One method for attempting to upgrade the result of Proposition \ref{M2_thm} to higher dimensional matrices would be to mimic the appropriate portion of the proof of Theorem \ref{thm:matrix-only-result}.  One such proof is found in \cite{peria2005weak}*{Proposition 4.2} where the Birkhoff-von Neumann Theorem \cite{B1946} is used to acquire the appropriate unitaries and the correct convex combination of elements of $\U(\vec{B})$ to yield $\vec{A}$.  However to attempt this proof in our context would require a continuous version of the Birkhoff-von Neumann Theorem, which does not hold; that is, not every $D \in C(\X, DS_n)$ can be written as $D = \sum_{i = 1}^k t_i P_i$ for some probability vector $\vec{t} \in \bR^k$ and permutation-valued maps $P_1, \dots, P_k \in C(\X, DS_n)$.   In fact, even increasing the set of permutation matrices to the set of unistochastic matrices (which would be enough in the proof of \cite{peria2005weak}*{Proposition 4.2}) fails.

\begin{prop}\label{prop:counter-birkhoff-von-neumann}
$\conv(C([-1, 1], US_3)) \subsetneq C([-1, 1], DS_3)$.
\end{prop}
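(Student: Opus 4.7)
The plan is to exhibit an explicit continuous doubly-stochastic-matrix-valued function on $[-1,1]$ that cannot be written as a finite convex combination of elements of $C([-1,1], US_3)$. Concretely, I would let $C \in \M_3$ be the cyclic permutation matrix sending $e_1 \mapsto e_2 \mapsto e_3 \mapsto e_1$ and consider
\[
D(x) = \tfrac{1+|x|}{2}\, I + \tfrac{1-|x|}{2}\, C, \qquad x \in [-1,1].
\]
This $D$ is continuous and visibly takes values in $DS_3$ since it is a pointwise convex combination of permutation matrices. Note that $D(0) = \tfrac{1}{2}(I + C)$ is the non-unistochastic matrix already displayed in the excerpt, while $D(\pm 1) = I$.

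To prove $D \notin \conv(C([-1,1], US_3))$, I would assume a decomposition $D = \sum_{i=1}^k t_i P_i$ with $t_i > 0$, $\sum t_i = 1$, and $P_i \in C([-1,1], US_3)$, and derive a contradiction in three short steps. First, since $D(x)$ has an entry equal to $0$ in positions $(1,2)$, $(2,3)$, and $(3,1)$ for every $x$, and the $P_i(x)$ have non-negative entries, every $P_i(x)$ must also vanish at those three positions. Second, I would use the row- and column-sum constraints to show directly that any element of $DS_3$ vanishing at those three positions lies on the one-parameter family $\{a I + (1-a)C : a \in [0,1]\}$. Third, I would verify that the only unistochastic members of this family are $I$ and $C$ themselves. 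Combining these observations, $P_i(x) \in \{I, C\}$ for all $x$, and continuity then forces each $P_i$ to be constant on $[-1,1]$; but then $D = \sum_i t_i P_i$ is itself constant, contradicting $D(0) \neq D(1)$.

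The only calculation of any substance is the third step, which I expect to be the main (but still short) obstacle. If $U = [u_{ij}]$ were a unitary with $[|u_{ij}|^2] = a I + (1-a)C$, then $u_{12} = u_{23} = u_{31} = 0$, while $|u_{11}|^2 = a$ and $|u_{21}|^2 = 1-a$; orthogonality of the first two rows of $U$ reduces to the single equation $u_{11}\,\overline{u_{21}} = 0$, forcing $a(1-a) = 0$. Hence $aI + (1-a)C$ is unistochastic iff $a \in \{0,1\}$, closing the argument and establishing the strict containment.
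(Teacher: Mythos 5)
Your proposal is correct, and at its core it is the same argument as the paper's: the paper also takes a nonconstant continuous path inside the segment joining $I$ to a $3$-cycle, namely $X(x)=\sin^2(x)\,I+\cos^2(x)\,P$ with $P$ the transpose of your $C$, uses nonnegativity of the entries to force the same zero pattern on each summand, concludes each summand is pointwise one of two permutation matrices, and derives a contradiction with nonconstancy. The genuine difference is in how the middle step is run. The paper chooses, for each $Q_k\in C([-1,1],US_3)$, a unitary $U_k\in C([-1,1],\M_3)$ with $q^{(k)}_{i,j}=|u^{(k)}_{i,j}|^2$ and argues via continuity of the entries $u^{(k)}_{1,1}$, $u^{(k)}_{1,2}$; you instead classify pointwise — every element of $DS_3$ with zeros at $(1,2)$, $(2,3)$, $(3,1)$ lies on the segment $\{aI+(1-a)C : a\in[0,1]\}$, only the endpoints $I$ and $C$ are unistochastic (your orthogonality relation $u_{11}\overline{u_{21}}=0$ giving $a(1-a)=0$ is exactly right) — and then use continuity of $P_i$ itself, as a map into the two-point set $\{I,C\}$, together with connectedness of $[-1,1]$ to force constancy. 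Your route buys a small but real simplification: it never requires a continuous unitary lift of a continuous $US_3$-valued function (a selection the paper asserts without comment), only pointwise unistochasticity; the paper's route, in exchange, works directly with the unitary entries supplied by the definition of $US_3$. Either way the strict containment $\conv(C([-1,1],US_3))\subsetneq C([-1,1],DS_3)$ follows, so your argument stands as a valid, slightly more economical variant of the published proof.
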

\begin{proof}
Since every unistochastic matrix is doubly stochastic and since the doubly stochastic matrices are convex, $\conv(C([-1, 1], US_3)) \subseteq C([-1, 1], DS_3)$.

To see that $\conv(C([-1, 1], US_3)) \neq  C([-1, 1], DS_3)$, let $X \in C([-1, 1], \M_3)$ be defined by
\[
X(x) = \sin^2(x) \begin{bmatrix}1 & 0 & 0 \\ 0 & 1 & 0 \\ 0 & 0 & 1 \end{bmatrix} + \cos^2(x) \begin{bmatrix}0 & 1 & 0 \\ 0 & 0 & 1 \\ 1 & 0 & 0 \end{bmatrix} = \begin{bmatrix} \sin^2(x) & \cos^2(x) & 0 \\ 0 & \sin^2(x) & \cos^2(x) \\ \cos^2(x) & 0 & \sin^2(x) \end{bmatrix}
\]
for all $x \in [-1, 1]$.   Clearly $X(x) \in DS_3$ for all $x \in [-1, 1]$.

To see that $X \notin \conv(C([-1, 1], US_3))$, suppose to the contrary that there exists a $\ell \in \bN$, a probability vector $\vec{x} \in \bR^\ell$, and $Q_1, \dots , Q_\ell \in C([-1, 1], US_3)$ such that 
\begin{align}
X = \sum_{k = 1}^\ell t_k Q_k. \label{eq:DS-not-convex-combo}
\end{align}
Thus for each $1 \leq k \leq \ell$ there exists a unitary $U_k \in C([-1, 1], \M_3)$ such that if $Q_k(x)= [q^{(k)}_{i,j}(x)]$ and $U_k(x) = [u_{i,j}^{(k)}(x)]$ then $q_{i,j}^{(k)}(x) = |u_{i,j}^{(k)}(x)| ^2$ for all $1 \leq i,j \leq 3$ and $x \in [-1, 1]$.  Consequently
\[
\begin{bmatrix} \sin^2(x) & \cos^2(x) & 0 \\ 0 & \sin^2(x) & \cos^2(x) \\ \cos^2(x) & 0 & \sin^2(x) \end{bmatrix} = \sum_{k = 1}^\ell t_k \begin{bmatrix} |u_{1,1}^{(k)}(x)| ^2 & |u_{1,2}^{(k)}(x)| ^2 & |u_{1,3}^{(k)}(x)| ^2 \\ |u_{2,1}^{(k)}(x)| ^2 & |u_{2,2}^{(k)}(x)| ^2 & |u_{2,3}^{(k)}(x)| ^2 \\ |u_{3,1}^{(k)}(x)| ^2& |u_{3,2}^{(k)}(x)| ^2 & |u_{3,3}^{(k)}(x)| ^2\end{bmatrix}  
\]
for all $x \in [-1,1]$.  Hence 
\[
u_{1,3}^{(k)}(x)= u_{2,1}^{(k)}(x) = u_{3,2}^{(k)}(x)=0
\]
for all $x \in [-1,1]$ and $1 \leq k \leq \ell$. Therefore
\[
U_k(x) = \begin{bmatrix} u_{1,1}^{(k)}(x)& u_{1,2}^{(k)}(x)& 0\\ 0& u_{2,2}^{(k)}(x)& u_{2,3}^{(k)}(x) \\ u_{3,1}^{(k)}(x)& 0& u_{3,3}^{(k)}(x) \end{bmatrix}
\]
for all $x \in [-1, 1]$ and $1 \leq k \leq \ell$. Since $U_k$ is a unitary and thus has orthonormal rows and columns, we obtain that
\begin{align*}
     |u_{1,1}^{(k)}(x)|^2 + |u_{1,2}^{(k)}(x)|^2  &=1 \text{ and} \\
     u_{1,1}^{(k)}(x) \overline{u_{1,2}^{(k)}(x)} &=0
\end{align*}
for every $x \in [-1, 1]$.  Since $u_{1,1}^{(k)} $ and $u_{1,2}^{(k)} $ are continuous functions on $[-1, 1]$, this forces $u_{1,1}^{(k)} =0 $ or $u_{1,2}^{(k)} =0$.

In the case $u_{1,1}^{(k)} =0 $, the first column of $U_k$ forces $|u^{(k)}_{3,1}| = 1$, the first and third columns forces $u^{(k)}_{3,3} = 0$, the third column then forces $|u^{(k)}_{2,3} | = 1$, the second and third columns forces $u^{(k)}_{2,2} = 0$, and thus the second column forces $|u^{(k)}_{1,2} | = 1$.  Hence
\[
Q_k(x) =  \begin{bmatrix}0 & 1 & 0 \\ 0 & 0 & 1 \\ 1 & 0 & 0 \end{bmatrix}
\]
in this case.  Similarly, if $u_{1,2}^{(k)} =0 $, then
\[
 Q_k(x)  = \begin{bmatrix}1 & 0 & 0 \\ 0 & 1 & 0 \\ 0 & 0 & 1 \end{bmatrix}.
\]
However, equation (\ref{eq:DS-not-convex-combo}) then implies $X$ is constant, which is a clear contradiction.  Hence $X \notin \conv(C([-1, 1], US_3))$.
\end{proof}

Although a lack of a continuous version of the Birkhoff-von Neumann Theorem only implies the proof of \cite{peria2005weak}*{Proposition 4.2} does not generalize, the same example used in the proof of Proposition \ref{prop:counter-birkhoff-von-neumann} yields a negative answer to the question on whether $\vec{A} \prec_c \vec{B}$ implies $\vec{A} \in \conv(\U(\vec{B}))$.

\begin{cor}\label{cor:cont-major-does-not-imply-exact-convex-hull}
There exists CSD families $\vec{A} = (A_1, A_2)$ and $\vec{B} = (B_1, B_2)$ in $C([-1, 1], \M_3)$ such that $\vec{A} \prec_c \vec{B}$ yet $\vec{A} \notin \conv(\U(\vec{B}))$.
\end{cor}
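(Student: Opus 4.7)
I would recycle the exact matrix
\[
X(x) = \sin^2(x) \, I_3 + \cos^2(x) \begin{bmatrix} 0 & 1 & 0 \\ 0 & 0 & 1 \\ 1 & 0 & 0 \end{bmatrix}
\]
from Proposition \ref{prop:counter-birkhoff-von-neumann} and engineer CSD pairs $\vec{A}, \vec{B}$ for which $X$ is the \emph{unique} continuous doubly stochastic function satisfying $XB = A$ (in the sense of Definition \ref{defn:joint-cont-DS-major}). Uniqueness will force every candidate witness for $\vec{A} \in \conv(\U(\vec{B}))$ to reproduce $X$, and Proposition \ref{prop:counter-birkhoff-von-neumann} says this is impossible.

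Concretely, I would take $B_1 = \diag(1, 2, 3)$ and $B_2 = \diag(1, 4, 9)$ as constant elements of $C([-1, 1], \M_3)$, and define $A_j \in C([-1, 1], \D_3)$ by
\[
A_j(x) = \diag\!\bigl(X(x)\,\vec{\lambda}(B_j)\bigr), \qquad j = 1, 2.
\]
Both families are trivially CSD (they are already diagonal), and $XB = A$ with $X \in C([-1, 1], DS_3)$ gives $\vec{A} \prec_c \vec{B}$ immediately.

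For the contradiction direction, suppose $\vec{A} \in \conv(\U(\vec{B}))$. The proof of (6)$\Rightarrow$(1) in Proposition \ref{prop:elementary-equivalences} produces an explicit relator $Y \in C([-1,1], DS_3)$ with $YB = A$; moreover that construction writes $Y = \sum_{i=1}^k t_i P_i$ for a probability vector $\vec{t}$ and functions $P_i \in C([-1, 1], US_3)$, each $P_i(x)$ being the entrywise squared-modulus matrix of a continuous unitary. In particular $Y \in \conv(C([-1, 1], US_3))$. For each fixed $x \in [-1, 1]$, the $i$-th row $(y_{i1}, y_{i2}, y_{i3})$ of $Y(x)$ satisfies the linear system
\begin{align*}
y_{i1} + y_{i2} + y_{i3} &= 1, \\
y_{i1} + 2 y_{i2} + 3 y_{i3} &= (A_1(x))_{ii}, \\
y_{i1} + 4 y_{i2} + 9 y_{i3} &= (A_2(x))_{ii},
\end{align*}
whose coefficient matrix is (a transpose of) the Vandermonde on $\{1,2,3\}$, hence has nonzero determinant. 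Thus $Y(x) = X(x)$ for every $x \in [-1, 1]$, placing $X \in \conv(C([-1,1], US_3))$ and contradicting Proposition \ref{prop:counter-birkhoff-von-neumann}.

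The only step of substance is verifying that the relator $Y$ extracted from $\vec{A} \in \conv(\U(\vec{B}))$ sits inside the smaller set $\conv(C([-1, 1], US_3))$, rather than merely the larger $C([-1, 1], \conv(US_3))$. I expect this to be essentially free, since the argument for (6)$\Rightarrow$(1) in Proposition \ref{prop:elementary-equivalences} literally writes the relator as a single finite convex combination of continuous unistochastic-valued functions. Once that observation is made, the Vandermonde rank computation closes the corollary without further work.
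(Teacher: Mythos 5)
Your proposal is correct and takes essentially the same approach as the paper: choose $\vec{A}, \vec{B}$ so that the continuous doubly stochastic relator satisfying $YB=A$ is forced to equal the matrix $X$ of Proposition \ref{prop:counter-birkhoff-von-neumann}, note that any witness of $\vec{A} \in \conv(\U(\vec{B}))$ produces (exactly as in the proof of (6)$\Rightarrow$(1) of Proposition \ref{prop:elementary-equivalences}) such a relator lying in $\conv(C([-1,1], US_3))$, and contradict Proposition \ref{prop:counter-birkhoff-von-neumann}. The only difference is cosmetic: the paper takes $B_1 = \diag(1,0,0)$, $B_2 = \diag(0,1,0)$ and reads the relator's first two columns off the diagonal entries directly (the third column coming from double stochasticity), whereas you force full uniqueness via the Vandermonde eigenvalues $(1,2,3)$ and $(1,4,9)$; both are valid, and your flagged point that the relator lies in $\conv(C([-1,1],US_3))$ rather than merely $C([-1,1],\conv(US_3))$ is indeed immediate from that construction.
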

\begin{proof}
Let $A_1, A_2, B_1, B_2 \in C([-1, 1], \M_3)$ be defined by
\begin{align*}
A_1(x) &= \begin{bmatrix}
\sin^2(x) & 0 & 0 \\ 0 & 0 & 0 \\ 0 &0  & \cos^2(x)
\end{bmatrix}   & A_2(x) &=  \begin{bmatrix}
\cos^2(x) & 0 & 0 \\ 0 & \sin^2(x) & 0 \\ 0 & 0 & 0 
\end{bmatrix} \\
B_1(x) &= \begin{bmatrix}
1 & 0 & 0 \\ 0 & 0 & 0 \\ 0 & 0 & 0 
\end{bmatrix}   & B_2(x) & = \begin{bmatrix}
0 & 0 & 0 \\ 0 & 1 & 0 \\ 0 & 0 & 0 
\end{bmatrix}
\end{align*}
for all $x \in [-1,1]$.  Clearly $\vec{A} = (A_1, A_2)$ and $\vec{B} = (B_1, B_2)$ are CSD families (as they are already diagonal matrices).  Moreover, if $A$ and $B$ are as in Definition \ref{defn:joint-cont-DS-major}, then
\[
A =\begin{bmatrix}
 \sin^2(x) & \cos^2(x) \\
 0 & \sin^2(x) \\
 \cos^2(x) & 0 
\end{bmatrix}   \qqand B = \begin{bmatrix}
1 & 0 \\
0 & 1 \\
0 & 0
\end{bmatrix}
\]
for all $x \in [-1,1]$.  

To see that $\vec{A} \prec_{c} \vec{B}$, let $X \in C([-1,1], DS_3)$ be as in Proposition \ref{prop:counter-birkhoff-von-neumann}.  As clearly $XB = A$, it is verified that $\vec{A} \prec_{c} \vec{B}$.

To see that $\vec{A} \notin \conv( \U(\vec{B}))$, suppose to the contrary that there exists a $\ell \in \bN$, a probability vector $\vec{t} \in \bR^\ell$, and unitaries $U_1, \ldots, U_\ell \in C([-1,1], \M_3)$ such that
\[
A_j = \sum^\ell_{k=1} t_k U_k^* B_j U_k
\]
for $j = 1,2$.   Write $U_k(x) = [u_{i,j}^{(k)}(x)]$ for all $1\leq k \leq \ell$ and note that
\begin{align*}
\begin{bmatrix}
\sin^2(x) & 0 & 0 \\ 0 & 0 & 0 \\ 0 &0  & \cos^2(x)
\end{bmatrix} &= \sum^\ell_{k=1} t_k \begin{bmatrix}
|u^{(k)}_{1,1}(x)|^2 & \overline{u^{(k)}_{1,1}(x)} u^{(k)}_{1,2}(x)  & \overline{u^{(k)}_{1,1}(x)} u^{(k)}_{1,3}(x) \\
\overline{u^{(k)}_{1,2}(x)} u^{(k)}_{1,1}(x)  & |u^{(k)}_{1,2}(x)|^2& \overline{u^{(k)}_{1,2}(x)} u^{(k)}_{1,1}(x) \\
\overline{u^{(k)}_{1,3}(x)} u^{(k)}_{1,1}(x)  & \overline{u^{(k)}_{1,3}(x)} u^{(k)}_{1,1}(x)  & |u^{(k)}_{1,3}(x)|^2
\end{bmatrix}\\
\begin{bmatrix}
\cos^2(x) & 0 & 0 \\ 0 & \sin^2(x) & 0 \\ 0 &0  & 0
\end{bmatrix} &= \sum^\ell_{k=1} t_k \begin{bmatrix}
|u^{(k)}_{2,1}(x)|^2 & \overline{u^{(k)}_{2,1}(x)} u^{(k)}_{2,2}(x)  & \overline{u^{(k)}_{2,1}(x)} u^{(k)}_{2,3}(x) \\
\overline{u^{(k)}_{2,2}(x)} u^{(k)}_{2,1}(x)  & |u^{(k)}_{2,2}(x)|^2& \overline{u^{(k)}_{2,2}(x)} u^{(k)}_{2,1}(x) \\
\overline{u^{(k)}_{2,3}(x)} u^{(k)}_{2,1}(x)  & \overline{u^{(k)}_{2,3}(x)} u^{(k)}_{1,1}(x)  & |u^{(k)}_{2,3}(x)|^2
\end{bmatrix}
\end{align*}
for all $x \in [-1,1]$.  Thus
\[
X(x) = \begin{bmatrix} \sin^2(x) & \cos^2(x) & 0 \\ 0 & \sin^2(x) & \cos^2(x) \\ \cos^2(x) & 0 & \sin^2(x) \end{bmatrix} = \sum^\ell_{k=1}t_k \left[|u_{j,i}^{(k)}(x)|^2   \right]
\]
for all $x\in [-1,1]$ where the first two columns come from above, and the third column comes as $X(x)$ is doubly stochastic and the right-hand side is a convex combination of unistochastic matrices and thus doubly stochastic.  Hence $X \in \conv(C([-1,1], US_3))$ thereby contradicting Proposition \ref{prop:counter-birkhoff-von-neumann}.  Thus $\vec{A} \notin \conv( \U(\vec{B}))$.
\end{proof}

By analyzing the proofs of Proposition \ref{prop:counter-birkhoff-von-neumann} and Corollary \ref{cor:cont-major-does-not-imply-exact-convex-hull}, only a single point of continuity is required due to the finite number of terms in a convex combination.  Thus the following result also holds.

\begin{cor}\label{cor:small-X-not-convex}
With $\X = \{0\} \cup \{ \frac{1}{n} \, \mid \, n \in \bN\}$, there exists CSD families $\vec{A} = (A_1, A_2)$ and $\vec{B} = (B_1, B_2)$ in $C(\X, \M_3)$ such that $\vec{A} \prec_c \vec{B}$ yet $\vec{A} \notin \conv(\U(\vec{B}))$.
\end{cor}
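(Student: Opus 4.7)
The plan is to reuse the families $\vec{A} = (A_1, A_2)$ and $\vec{B} = (B_1, B_2)$ and the doubly stochastic matrix $X$ from Corollary \ref{cor:cont-major-does-not-imply-exact-convex-hull} and Proposition \ref{prop:counter-birkhoff-von-neumann}, now viewed as elements of $C(\X, \M_3)$ via restriction to $\X = \{0\} \cup \{1/n : n \in \bN\}$. They still form CSD families (being diagonal at each point), and the restriction of $X$ still lies in $C(\X, DS_3)$ with $XB = A$, so $\vec{A} \prec_c \vec{B}$ is immediate.

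For $\vec{A} \notin \conv(\U(\vec{B}))$, I would argue by contradiction. Suppose $A_j = \sum_{k=1}^\ell t_k U_k^* B_j U_k$ for $j = 1, 2$ with $\ell \in \bN$, unitaries $U_1, \ldots, U_\ell \in C(\X, \M_3)$, and a probability vector $\vec{t}$. Repeating the first computation in the proof of Corollary \ref{cor:cont-major-does-not-imply-exact-convex-hull} gives
\[
X(x) = \sum_{k=1}^\ell t_k Q_k(x), \qquad Q_k(x) = \bigl[|u_{j,i}^{(k)}(x)|^2\bigr].
\]
Then for each \emph{individual} $x \in \X$, the pointwise analysis in the proof of Proposition \ref{prop:counter-birkhoff-von-neumann} applies: the zero entries of $X(x)$ force $u_{1,3}^{(k)}(x) = u_{2,1}^{(k)}(x) = u_{3,2}^{(k)}(x) = 0$, and the unitarity of $U_k(x)$ forces either $u_{1,1}^{(k)}(x) = 0$ or $u_{1,2}^{(k)}(x) = 0$, which in turn determines all remaining entries of $Q_k(x)$, yielding
\[
Q_k(x) \in \left\{\, I_3,\ \begin{bmatrix}0&1&0\\0&0&1\\1&0&0\end{bmatrix}\,\right\}
\]
at every $x \in \X$ and every $k$.

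The final step is the key difference from Proposition \ref{prop:counter-birkhoff-von-neumann}: rather than using connectedness of the domain to force each $Q_k$ to be constant, I would use the finiteness of $\ell$. The map $x \mapsto \sum_{k=1}^\ell t_k Q_k(x)$ attains at most $2^\ell$ distinct values, since each $Q_k(x)$ has only two possible values. But $X(1/n) = \sin^2(1/n)\,I_3 + \cos^2(1/n)\,P$ takes infinitely many distinct values as $n$ ranges over $\bN$, giving the desired contradiction. I expect no serious obstacle here, as the argument is essentially a refinement of the cited results; the only subtle point is recognizing that the pointwise dichotomy for each $Q_k(x)$, without any coherence across $x$, is already enough to bound the range of $X$ to a finite set when $\ell$ is finite.
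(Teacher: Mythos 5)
Your argument is correct, and it reaches the contradiction by a genuinely different mechanism than the one the paper has in mind. The paper gives no separate proof of this corollary; the sentence preceding it indicates the intended argument, which does use continuity of the unitaries, but only at the limit point $0$: each $Q_k$ is continuous and, by the pointwise dichotomy, $\{I_3,P\}$-valued (with $P$ the cyclic permutation matrix), hence eventually constant along $1/n \to 0$, and the finiteness of $\ell$ then forces $X(1/n)=X(0)=P$ for all large $n$, contradicting $\sin^2(1/n)>0$. You discard continuity of the $U_k$ entirely and replace it by a counting argument: once $Q_k(x)\in\{I_3,P\}$ at every point, the map $x\mapsto\sum_{k=1}^\ell t_k Q_k(x)$ has range of cardinality at most $2^\ell$, while $X$ takes infinitely many values on $\X$ since $n\mapsto\sin^2(1/n)$ is injective. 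This is sound (the pointwise dichotomy in Proposition \ref{prop:counter-birkhoff-von-neumann} uses only unitarity at the fixed point and the zero pattern of $X(x)$, not continuity), and it buys a bit more than the paper's route: the same example then witnesses the failure of $\prec_c\implies{}\in\conv(\U(\vec{B}))$ over \emph{any} infinite subset of $[-1,1]$, including discrete ones, so in particular the single-operator phenomenon for $\ell_\infty(\bN,\M_n)$ discussed in Section \ref{sec:bonus} does not persist for pairs; the paper's argument, by contrast, genuinely uses the limit point $0$, and it is consistent with Corollary \ref{cor:finte-X-all-good} in exactly the same way yours is (a finite $\X$ gives only finitely many values of $X$). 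One bookkeeping remark: in the computation of Corollary \ref{cor:cont-major-does-not-imply-exact-convex-hull} the relevant matrices are $Q_k=[|u^{(k)}_{j,i}|^2]$ with transposed indices, so the entries annihilated by the zero pattern of $X$ are $u^{(k)}_{3,1}$, $u^{(k)}_{1,2}$, $u^{(k)}_{2,3}$ rather than the ones you list; equivalently, one runs the case analysis of Proposition \ref{prop:counter-birkhoff-von-neumann} on the unitary $U_k(x)^T$. The conclusion $Q_k(x)\in\{I_3,P\}$, and hence your proof, is unaffected.
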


In regards to Proposition \ref{prop:elementary-equivalences}, it still remains to discuss whether $\vec{A} \in \cconv(\U(\vec{B}))$ implies $\vec{A} \in \conv(\U(\vec{B}))$ and whether $\vec{A} \prec_\pt \vec{B}$ implies $\vec{A} \in \cconv(\U(\vec{B}))$.  The latter will be demonstrated in Section \ref{sec:main} with the proof of Theorem \ref{main_theorem}.  In particular, the CSD families $\vec{A}, \vec{B} \in C([-1,1], \M_3)$ from Corollary \ref{cor:cont-major-does-not-imply-exact-convex-hull} have the property that $\vec{A} \prec_c \vec{B}$ so $\vec{A} \in \cconv(\U(\vec{B}))$ by Proposition \ref{prop:elementary-equivalences} but $\vec{A} \notin \conv(\U(\vec{B}))$ thereby yielding an example where $\vec{A} \in \cconv(\U(\vec{B}))$ does not imply $\vec{A} \in \conv(\U(\vec{B}))$.

\section{Joint Majorization in Finite Dimensional C$^*$-Algebras}
\label{sec:finite-dimensional}

As a stepping stone to proving Theorem \ref{main_theorem}, in this section we will demonstrate that if $\X$ has a finite number of points, then $\vec{A} \prec_{\pt} \vec{B}$ implies $\vec{A} \in \conv(\U(\vec{B}))$.  To demonstrate said result, a characterization of the convex hull of the joint unitary orbit of an abelian family in any finite dimensional C$^*$-algebra is developed.  

The main obstacle to obtaining these results is resolving how one can combine different convex combinations from different portions of the space.  The following algorithm is useful in converting multiple probability vectors into a single probability vector that will work (by repeating some unitaries if necessary).

\begin{lem} \label{finite_algo}
Let $\vec{t}^{(1)}, \dots, \vec{t}^{(k)} \in \bR^\ell$ be probability vectors. Then there exists a $N \in \bN$ and a probability vector $\vec{a} \in \bR^N$ such that for each $1 \leq j \leq k$ there exists a partition $Y_1 \cup \cdots \cup Y_\ell$ of $\{1,\ldots, N\}$ such that
\begin{equation}
\sum_{p \in Y_i} a_p = t_i^{(j)}  \label{eq:decomp}
\end{equation}
for all $1 \leq i \leq \ell$.
\end{lem}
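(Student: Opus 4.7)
The plan is to view each probability vector as a partition of the interval $[0,1]$ and take a common refinement. Specifically, for each $1 \leq j \leq k$, associate to $\vec{t}^{(j)}$ the partial sums $s_0^{(j)} = 0$ and $s_i^{(j)} = t_1^{(j)} + \cdots + t_i^{(j)}$ for $1 \leq i \leq \ell$, so that $s_\ell^{(j)} = 1$ and the half-open intervals $(s_{i-1}^{(j)}, s_i^{(j)}]$ (the empty ones being ignored) partition $(0,1]$ with lengths $t_i^{(j)}$.

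Next I would form the finite set $S = \{s_i^{(j)} : 0 \leq i \leq \ell,\ 1 \leq j \leq k\} \subseteq [0,1]$, list its elements in increasing order as $0 = u_0 < u_1 < \cdots < u_N = 1$, and define
\[
a_p = u_p - u_{p-1} \qquad \text{for } 1 \leq p \leq N.
\]
By construction, $a_p > 0$ and $\sum_{p=1}^N a_p = 1$, so $\vec{a} = (a_1, \ldots, a_N) \in \bR^N$ is a probability vector. Moreover, every endpoint $s_i^{(j)}$ lies in $\{u_0, u_1, \ldots, u_N\}$, so each interval $(u_{p-1}, u_p)$ sits entirely inside some interval $(s_{i-1}^{(j)}, s_i^{(j)})$ with $t_i^{(j)} > 0$; this defines a unique index $i(j,p) \in \{1, \ldots, \ell\}$ for each pair $(j,p)$.

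For each fixed $j$ I would then set $Y_i = \{p \in \{1, \ldots, N\} : i(j,p) = i\}$, with $Y_i = \emptyset$ when $t_i^{(j)} = 0$. These sets are pairwise disjoint by uniqueness of $i(j,p)$ and their union is $\{1,\ldots, N\}$, so they form a partition. Summing telescopically,
\[
\sum_{p \in Y_i} a_p = \sum_{\substack{1 \leq p \leq N \\ i(j,p) = i}} (u_p - u_{p-1}) = s_i^{(j)} - s_{i-1}^{(j)} = t_i^{(j)},
\]
which is precisely \eqref{eq:decomp}. There is no substantive obstacle here: the only mild subtlety is the bookkeeping for coordinates with $t_i^{(j)} = 0$, which is handled by declaring the corresponding $Y_i$ to be empty so that the empty sum gives the correct value $0$.
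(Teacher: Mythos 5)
Your proof is correct, but it follows a genuinely different route from the paper. The paper takes $N=\ell^k$ and lets $\vec{a}$ consist of all products $t^{(1)}_{i_1}t^{(2)}_{i_2}\cdots t^{(k)}_{i_k}$ (a product-measure construction), obtaining the partition for the $j$-th vector by grouping the tuples according to their $j$-th index; the verification there is just Fubini-style summation. You instead encode each $\vec{t}^{(j)}$ as the partition of $(0,1]$ by its partial sums, take the common refinement determined by the union of all the endpoints, and let $\vec{a}$ be the lengths of the refinement intervals, with the partitions $Y_1\cup\cdots\cup Y_\ell$ read off by which coarse interval each refinement interval sits in; the identity \eqref{eq:decomp} is then a telescoping sum. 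Your construction buys a much smaller $N$ (at most $k(\ell-1)+1$ rather than $\ell^k$) and strictly positive weights $a_p$, at the cost of slightly more bookkeeping (locating each refinement interval inside a unique coarse interval, and allowing $Y_i=\emptyset$ when $t_i^{(j)}=0$); the paper's construction is more symmetric and verified in one line, but produces an exponentially long vector and may include zero entries. The only point worth flagging is that your partitions can have empty pieces, whereas the paper's never do; since the lemma is only used to re-index convex combinations (repeating or omitting unitaries as needed) in Theorem \ref{finite_main_thm}, empty pieces cause no harm, so this is a cosmetic difference rather than a gap.
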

\begin{proof}
Let $N = \ell^k$ and let $\vec{a} \in \bR^N$ be the vector whose entries are
\[
t^{(1)}_{i_1} t^{(2)}_{i_2} \cdots t^{(k)}_{i_k}
\]
for all $1 \leq i_1, i_2, \ldots, i_k \leq \ell$.  Since
\[
\sum^\ell_{i=1} t^{(j)}_{i} = 1
\]
for all $1\leq j \leq k$, it is easily verified that $\vec{a}$ is a probability vector.  Moreover, since for all $1\leq j_0 \leq k$ and $1\leq i_{j_0} \leq \ell$ we have
\[
\sum^k_{\substack{j=1 \\ j \neq j_0}} \sum^\ell_{i_j = 1} t^{(1)}_{i_1} t^{(2)}_{i_2} \cdots t^{(k)}_{i_k} = t^{(j_0)}_{i_{j_0}},
\]
the desired partition that satisfies equation (\ref{eq:decomp}) for $j_0$ exists.
\end{proof}

The use of Lemma \ref{finite_algo} is made clear in the proof of the following result.

\begin{thm} \label{finite_main_thm}
Let $k \in \bN$, let $\fA_1, \ldots, \fA_k$ be unital C$^*$-algebras, and let $\fA = \fA_1 \oplus \cdots \oplus \fA_k$. Let $\vec{A} = (A_1, \dots, A_m)$ and $\vec{B} = (B_1, \dots, B_m)$ be abelian families in $\fA$ and write
\[
A_i = A_i^{(1)} \oplus  \cdots \oplus A_{i}^{(k)} \qand B_i = B_{i}^{(1)} \oplus \cdots \oplus B_i^{(k)}
\]
for all $1\leq i \leq m$.  If for all $1 \leq j \leq k$ we have in $\fA_j$ that
\[
(A_1^{(j)}, \ldots, A_m^{(j)} ) \in \conv\left(\U ( B_1^{(j)}, \ldots, B_m^{(j)} ) \right), 
\]
then $\vec{A} \in \textnormal{conv}(\mathcal{U}(B))$.
\end{thm}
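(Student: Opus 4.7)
The plan is to reduce to a single probability vector and a single collection of unitaries in $\fA$ by exploiting the block-diagonal structure together with Lemma \ref{finite_algo}. For each $1 \leq j \leq k$, the hypothesis gives a positive integer $\ell_j$, a probability vector $\vec{t}^{(j)} \in \bR^{\ell_j}$, and unitaries $U_1^{(j)}, \ldots, U_{\ell_j}^{(j)} \in \fA_j$ such that
\[
A_i^{(j)} = \sum_{p=1}^{\ell_j} t_p^{(j)} (U_p^{(j)})^* B_i^{(j)} U_p^{(j)}
\]
for all $1 \leq i \leq m$. By padding each $\vec{t}^{(j)}$ with zero entries (and repeating $U_1^{(j)}$, say, to keep the unitaries indexed), we may assume that $\ell_j = \ell$ is the same for every $j$. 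The core obstruction is then that the probability vectors $\vec{t}^{(1)}, \ldots, \vec{t}^{(k)}$ may differ and the unitaries $U_p^{(j)}$ exist only componentwise, so they cannot be assembled directly into a unitary of $\fA$.

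To overcome this, I would apply Lemma \ref{finite_algo} to the probability vectors $\vec{t}^{(1)}, \ldots, \vec{t}^{(k)} \in \bR^\ell$, obtaining an $N \in \bN$, a probability vector $\vec{a} \in \bR^N$, and for each $j$ a partition $\{1, \ldots, N\} = Y_1^{(j)} \cup \cdots \cup Y_\ell^{(j)}$ such that $\sum_{p \in Y_i^{(j)}} a_p = t_i^{(j)}$. For each $1 \leq p \leq N$ and each $1 \leq j \leq k$, let $\phi_j(p)$ denote the unique index $i$ with $p \in Y_i^{(j)}$, and set
\[
W_p = U_{\phi_1(p)}^{(1)} \oplus U_{\phi_2(p)}^{(2)} \oplus \cdots \oplus U_{\phi_k(p)}^{(k)} \in \fA.
\]
Each $W_p$ is a unitary in $\fA$, so $(W_p^* B_1 W_p, \ldots, W_p^* B_m W_p) \in \U(\vec{B})$.

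It then remains to verify that $A_i = \sum_{p=1}^N a_p W_p^* B_i W_p$ for all $i$, which is routine once the partition property is in hand. Indeed, restricting to the $j$-th summand of $\fA$ and grouping the sum over $p$ according to the fibers of $\phi_j$ gives
\[
\sum_{p=1}^N a_p (W_p^{(j)})^* B_i^{(j)} W_p^{(j)} = \sum_{q=1}^\ell \Bigl( \sum_{p \in Y_q^{(j)}} a_p \Bigr) (U_q^{(j)})^* B_i^{(j)} U_q^{(j)} = \sum_{q=1}^\ell t_q^{(j)} (U_q^{(j)})^* B_i^{(j)} U_q^{(j)} = A_i^{(j)},
\]
which completes the argument. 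The main conceptual obstacle is the one resolved by Lemma \ref{finite_algo}: simultaneously realizing all $k$ convex combinations as refinements of a single probability vector; after that, the direct-sum structure of $\fA$ does the remaining bookkeeping.
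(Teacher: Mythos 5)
Your proposal is correct and follows essentially the same route as the paper: pad the componentwise convex combinations to a common length, invoke Lemma \ref{finite_algo} to refine all the probability vectors $\vec{t}^{(1)},\ldots,\vec{t}^{(k)}$ into a single probability vector $\vec{a}\in\bR^N$, assemble the block-diagonal unitaries $W_p$ from the componentwise unitaries selected by the partitions, and regroup the sum. Your explicit use of the selection maps $\phi_j$ even fixes a small notational slip in the paper's own proof (where the direct sum is indexed by $m$ rather than $k$), so nothing further is needed.
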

\begin{proof}
By the assumptions, for each $1 \leq j \leq k$, there exists an $\ell_j \in \bN$, unitaries $U_{j, 1}, \dots, U_{j, \ell} \in \fA_j$, and a probability vector $\vec{t}^{(j)} \in \bR^{\ell_j}$ such that
\[
A_i^{(j)} = \sum_{r = 1}^{\ell_j} t_{r}^{(j)} U_{j, r}^* B_i^{(j)}U_{j, r}
\]
for all $1 \leq i \leq m$.  By extending the probability vectors via adding a sufficient number of 0 terms and by adding unitaries equal to the identity, we can assume without loss of generality that $\ell_1 = \ell_2 = \cdots = \ell_m = \ell$. By applying Lemma \ref{finite_algo} to the probability vectors $\vec{t}^{(1)}, \dots, \vec{t}^{(m)} \in \bR^{\ell}$, we obtain a probability vector $\vec{a} \in \bR^N$ for some $N \in \bN$ such that for every $1 \leq j \leq m$  there exists $V_{j,1}, \ldots, V_{j,N} \in \{U_{j,1}, \ldots, U_{j, \ell}\}$ such that
\[
A_i^{(j)} =\sum_{r = 1}^{\ell} t_{r}^{(j)} U_{j,r}^* B_i^{(j)} U_{j,r} = \sum_{q = 1}^N a_qV^*_{j,q} B_i^{(j)}V_{j,q}
\]
for all $1 \leq i \leq m$ (i.e. if $Y_1 \cup \cdots \cup Y_\ell$ is the partition of $\{1, \ldots, N\}$, then $V_{j,q} = U_{j, p}$ whenever $q \in Y_p$).  Consequently, with
\[
V = V_{1,q} \oplus \cdots \oplus V_{m,q}
\]
which is a unitary in $\fA$, it follows that
\begin{align*}
A_i = \left( \sum_{q = 1}^N a_qV^*_{1,q} B_i^{(1)}V_{1,q} \right)  \oplus \cdots \oplus \left(  \sum_{q = 1}^N a_qV^*_{m,q} B_i^{(m)}V_{m,q} \right) = \sum_{q = 1}^N a_q V^*B_i V
\end{align*}
for all $1 \leq i \leq m$.  Thus $\vec{A} \in \conv(\mathcal{U}(\vec{B}))$. 
\end{proof}

\begin{cor} 
Let $\fA$ be a finite dimensional C$^*$-algebra and let $\vec{A} = (A_1, \ldots, A_m)$ and $\vec{B} = (B_1, \ldots, B_m)$ be abelian families in $\fA$.  The following are equivalent:
\begin{enumerate}
\item $\vec{A} \prec_{\tr} \vec{B}$.
\item $\vec{A} \in \conv(\mathcal{U}(B))$.
\item $\vec{A} \in \cconv(\mathcal{U}(B))$.
\end{enumerate}
\end{cor}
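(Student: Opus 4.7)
The plan is to prove the cycle (2) $\implies$ (3) $\implies$ (1) $\implies$ (2) by reducing everything to the matrix case (Theorem \ref{thm:matrix-only-result}) and invoking Theorem \ref{finite_main_thm} to assemble the pieces. Since $\fA$ is finite dimensional, write $\fA \cong \M_{n_1} \oplus \cdots \oplus \M_{n_k}$, and for each $1 \leq j \leq k$ let $\pi_j : \fA \to \M_{n_j}$ denote the projection onto the $j^{\mathrm{th}}$ summand, so that $\pi_j(\vec{A}) = (A_1^{(j)}, \ldots, A_m^{(j)})$ and $\pi_j(\vec{B}) = (B_1^{(j)}, \ldots, B_m^{(j)})$ are abelian families in $\M_{n_j}$.

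The implication (2) $\implies$ (3) is immediate from the definition of norm closure. For (3) $\implies$ (1), applying the unital $*$-homomorphism $\pi_j$ to a norm-approximating sequence of convex combinations of elements of $\U(\vec{B})$ yields $\pi_j(\vec{A}) \in \cconv(\U(\pi_j(\vec{B})))$ in $\M_{n_j}$, whence Theorem \ref{thm:matrix-only-result} gives $\pi_j(\vec{A}) \prec_{\tr} \pi_j(\vec{B})$ for every $j$. A standard structural fact is that every tracial state on $\fA$ has the form $\tau = \sum_{j=1}^k s_j (\tr_j \circ \pi_j)$, where $\tr_j$ is the unique tracial state on $\M_{n_j}$ and $\vec{s} \in \bR^k$ is a probability vector. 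Hence for any continuous convex $f : \bR^m \to \bR$,
\[
\tau(f(A_1, \ldots, A_m)) = \sum_{j=1}^k s_j \, \tr_j(f(\pi_j(\vec{A}))) \leq \sum_{j=1}^k s_j \, \tr_j(f(\pi_j(\vec{B}))) = \tau(f(B_1, \ldots, B_m)),
\]
establishing (1).

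For (1) $\implies$ (2), the main direction, observe that for each $j$ the functional $\tr_j \circ \pi_j$ is itself a tracial state on $\fA$, so $\vec{A} \prec_{\tr} \vec{B}$ directly yields $\pi_j(\vec{A}) \prec_{\tr} \pi_j(\vec{B})$ in $\M_{n_j}$. By Theorem \ref{thm:matrix-only-result}, this implies $\pi_j(\vec{A}) \in \conv(\U(\pi_j(\vec{B})))$ for every $j$. Theorem \ref{finite_main_thm} then amalgamates these summand-wise convex combinations into a single convex combination witnessing $\vec{A} \in \conv(\U(\vec{B}))$.

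The only delicate point is (1) $\implies$ (2): different summands may produce convex combinations indexed by different probability vectors and different unitaries, and merging them into a single probability vector indexing a unitary in $\fA$ is precisely the content of Lemma \ref{finite_algo} and Theorem \ref{finite_main_thm}, so beyond appealing to those results no further technical work is required.
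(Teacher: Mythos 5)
Your proposal is correct and follows essentially the same route as the paper: decompose $\fA$ into matrix-algebra summands, apply Theorem \ref{thm:matrix-only-result} on each summand, and amalgamate the summand-wise convex combinations with Theorem \ref{finite_main_thm} (via Lemma \ref{finite_algo}). The paper's proof is just a two-line citation of those same two theorems, and your write-up supplies the details (the summand projections, the structure of tracial states on $\fA$, and the implication cycle) that the paper leaves implicit.
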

\begin{proof}
Recall every finite dimensional C$^*$-algebra is the direct sum of a finite number of matrix algebras.  Hence the result follows immediately from Theorem \ref{thm:matrix-only-result} and Theorem \ref{finite_main_thm}.
\end{proof}

\begin{cor} \label{cor:finte-X-all-good}
Let $k \in \bN$  and let $\X = \{1,\ldots, k\}$.  All of the properties listed in Proposition \ref{prop:elementary-equivalences} are equivalent.
\end{cor}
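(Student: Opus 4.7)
The plan is to close the chain of implications already established in Proposition \ref{prop:elementary-equivalences} by proving the single missing implication, namely (2) $\implies$ (6), under the hypothesis that $\X = \{1, \ldots, k\}$. Combined with the equivalences and implications from Proposition \ref{prop:elementary-equivalences}, establishing (2) $\implies$ (6) will yield the cyclic equivalence of all seven properties (the two CSD-conditional properties (1) and (5) being pulled in through the already-known chain (6) $\implies$ (1) $\iff$ (5) $\implies$ (2)).

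First I would observe that when $\X$ is a $k$-point discrete space, the evaluation maps give a C$^*$-isomorphism
\[
C(\X, \M_n) \;\cong\; \underbrace{\M_n \oplus \M_n \oplus \cdots \oplus \M_n}_{k \text{ copies}},
\]
sending $T$ to $(T(1), \ldots, T(k))$ and sending unitaries to tuples of unitaries. Under this identification, $\vec{A}$ and $\vec{B}$ become abelian families in this finite-dimensional C$^*$-algebra whose $j^{\text{th}}$ summands are the abelian families $(\vec{A}(j))$ and $(\vec{B}(j))$ in $\M_n$.

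Assuming $\vec{A} \prec_{\pt} \vec{B}$, by definition $\vec{A}(j) \prec \vec{B}(j)$ in $\M_n$ for every $1 \le j \le k$. By Theorem \ref{thm:matrix-only-result} applied in each matrix summand, this gives
\[
\vec{A}(j) \;\in\; \conv\bigl(\U(\vec{B}(j))\bigr) \qfor 1 \le j \le k.
\]
At this point, the direct-sum statement of Theorem \ref{finite_main_thm} applies verbatim and produces a single probability vector together with a tuple of unitaries in $C(\X,\M_n)$ which exhibit $\vec{A}$ as a convex combination of elements of $\U(\vec{B})$; hence $\vec{A} \in \conv(\U(\vec{B}))$, which is property (6).

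The main (and really only) subtlety is the need to combine convex combinations coming from different points of $\X$ into a single convex combination realized by \emph{globally defined} unitaries in $C(\X, \M_n)$; different points of $\X$ may a priori require different numbers of terms and different probability vectors. This is precisely the obstacle addressed by Lemma \ref{finite_algo} in the proof of Theorem \ref{finite_main_thm}, so invoking that theorem resolves it with no additional work. With (2) $\implies$ (6) in hand, the conclusion follows by citing the implications and equivalences already established in Proposition \ref{prop:elementary-equivalences}.
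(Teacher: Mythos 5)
Your proposal is correct and is essentially the paper's intended argument: the corollary is meant to follow immediately from identifying $C(\{1,\ldots,k\},\M_n)$ with a finite direct sum of copies of $\M_n$, applying Theorem \ref{thm:matrix-only-result} in each summand, and gluing via Theorem \ref{finite_main_thm} (whose Lemma \ref{finite_algo} handles the common probability vector), which supplies exactly the missing implication (2) $\implies$ (6) needed to close the chain in Proposition \ref{prop:elementary-equivalences}.
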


\section{Joint Continuous Majorization}
\label{sec:main}

In this section, we will provide a proof of Theorem \ref{main_theorem} showing that pointwise joint majorization characterizes the elements of the closed convex hull of the joint unitary orbits of abelian families in continuous matrix algebras.  The same ideas will be used in the next section which characterizes the closed convex hull of the joint unitary orbits of abelian families in subhomogeneous C$^*$-algebras.

In order to prove Theorem \ref{main_theorem}, let $\X$ be a compact metric space and let $\vec{A} = (A_1, \ldots, A_m)$ and $\vec{B} = (B_1,\ldots, B_m)$ be abelian families in $C(\X, \M_n)$.  Since $C(\X)$ is an abelian C$^*$-algebra, the second continuous dual space $\fM = C(\X)^{**}$ is a abelian von Neumann algebra that naturally contains $C(\X)$.  Moreover $C(\X)$ is dense in $\fM$ with respect to the strong operator topology and it is easy to see that the second continuous dual space of $C(\X, \M_n) = C(\X) \otimes \M_n$ is $\fM \otimes \M_n$.

Using the same ideas as in \cite{ng2018majorization}*{Proposition 4.1}, we have the following result which shows that it suffices to prove Theorem \ref{main_theorem} in the context of $\fM \otimes \M_n$.

\begin{lem}
\label{lem:suffices-to-consider-VN-alg}
Using the above notation
\[
\cconv(\U_{C(\X, \M_n)}(\vec{B})) = \cconv (\U_{\fM \otimes \M_n}(\vec{B})) \cap C(\X, \M_n)^m.
\]
\end{lem}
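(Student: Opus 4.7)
The plan is to prove the two inclusions separately.

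The inclusion $\subseteq$ is immediate: every unitary in $C(\X,\M_n)$ is a unitary in $\fM \otimes \M_n$, so $\U_{C(\X,\M_n)}(\vec B) \subseteq \U_{\fM\otimes\M_n}(\vec B)$, and since the norm on $C(\X,\M_n)$ is the restriction of the norm on $\fM \otimes \M_n$, taking convex hulls and norm closures yields $\cconv(\U_{C(\X,\M_n)}(\vec B)) \subseteq \cconv(\U_{\fM\otimes\M_n}(\vec B))$.  The left-hand side clearly lies in $C(\X,\M_n)^m$.

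For the reverse inclusion I would argue by a Hahn--Banach separation.  Suppose, for contradiction, that $\vec A \in \cconv(\U_{\fM\otimes\M_n}(\vec B)) \cap C(\X,\M_n)^m$ but $\vec A \notin \cconv(\U_{C(\X,\M_n)}(\vec B))$.  Since the latter is a norm-closed convex subset of the Banach space $C(\X,\M_n)^m$, there exist a continuous real-linear functional $\Phi$ on $C(\X,\M_n)^m$ and a constant $\alpha \in \bR$ such that
\[
\Phi(\vec A) > \alpha \geq \Phi(\vec C) \qforal \vec C \in \cconv(\U_{C(\X,\M_n)}(\vec B)).
\]
Writing $\Phi(\vec T) = \sum_{i=1}^m \mathrm{Re}\,\phi_i(T_i)$ with $\phi_i \in C(\X,\M_n)^*$, the canonical embedding $C(\X,\M_n)^* \hookrightarrow (C(\X,\M_n)^{**})^*$ produces a weak$^*$-continuous extension $\tilde\phi_i$ of $\phi_i$ to $\fM \otimes \M_n$, and I would let $\tilde\Phi(\vec T) = \sum_{i=1}^m \mathrm{Re}\,\tilde\phi_i(T_i)$, a norm-continuous extension of $\Phi$ to $(\fM \otimes \M_n)^m$.

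The crux is to verify that $\tilde\Phi(U^* \vec B U) \leq \alpha$ for every unitary $U \in \fM \otimes \M_n$.  For this I would invoke the Kaplansky density theorem: since $C(\X,\M_n)$ is SOT-dense in its bidual $\fM \otimes \M_n$, every unitary $U \in \fM \otimes \M_n$ is the SOT$^*$-limit of a bounded net of unitaries $V_\lambda \in C(\X,\M_n)$.  On bounded sets multiplication is jointly SOT$^*$-continuous, so $V_\lambda^* B_i V_\lambda \to U^* B_i U$ in SOT$^*$, hence in the weak$^*$ (i.e., ultraweak) topology of $\fM \otimes \M_n$, for each $i$.  Weak$^*$-continuity of the $\tilde\phi_i$ yields $\tilde\Phi(V_\lambda^* \vec B V_\lambda) \to \tilde\Phi(U^* \vec B U)$, and since $V_\lambda \in C(\X,\M_n)$ gives $\tilde\Phi(V_\lambda^* \vec B V_\lambda) = \Phi(V_\lambda^* \vec B V_\lambda) \leq \alpha$, the limit obeys $\tilde\Phi(U^* \vec B U) \leq \alpha$.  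Convexity and norm-continuity of $\tilde\Phi$ then force $\tilde\Phi(\vec C) \leq \alpha$ for every $\vec C \in \cconv(\U_{\fM\otimes\M_n}(\vec B))$, contradicting $\tilde\Phi(\vec A) = \Phi(\vec A) > \alpha$.

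I expect the main obstacle to be managing the interplay of the several topologies (norm, weak$^*$, SOT, and SOT$^*$): one must use the SOT$^*$ form of Kaplansky's theorem so that both $V_\lambda$ and $V_\lambda^*$ converge, and then transfer that convergence to the weak$^*$ topology on which the extended functionals $\tilde\phi_i$ are continuous.  The rest is a fairly standard separation argument.
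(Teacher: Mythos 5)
Your proof is correct, and while it rests on the same two essential ingredients as the paper's argument --- the unitary form of the Kaplansky density theorem and the identification of the ultraweak topology on $\fM \otimes \M_n = C(\X, \M_n)^{**}$ with the weak$^*$ topology coming from $C(\X, \M_n)^*$ --- it organizes them in the dual way. The paper works directly with elements: it approximates $\vec{C} \in \cconv(\U_{\fM \otimes \M_n}(\vec{B})) \cap C(\X, \M_n)^m$ by convex combinations of conjugates by finitely many von Neumann algebra unitaries, replaces those unitaries by Kaplansky nets from $C(\X, \M_n)$, and then transfers the resulting convex set through a chain of closures (ultrastrong$^*$ and ultraweak closures of convex sets agree; the ultraweak topology restricted to $C(\X, \M_n)^m$ is the weak topology; weak and norm closures of convex sets agree), each step being an application of Hahn--Banach. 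You instead argue by contraposition: separate the point from $\cconv(\U_{C(\X, \M_n)}(\vec{B}))$ by a functional, extend it canonically to a normal functional on the bidual, and apply Kaplansky to an \emph{arbitrary} unitary of $\fM \otimes \M_n$ to show the extended functional satisfies the same bound on all of $\U_{\fM \otimes \M_n}(\vec{B})$, hence on its closed convex hull. Your version compresses the paper's two Hahn--Banach closure-switching steps into a single separation plus the automatic weak$^*$-continuity of the extended functional, so the topology bookkeeping is lighter; the paper's version stays at the level of elements and yields explicit approximants, which is the form in which Lemma \ref{lem:suffices-to-consider-VN-alg} is then consumed in the proof of Theorem \ref{main_theorem}. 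The points that genuinely need care --- using the strong$^*$ form of Kaplansky so that $V_\lambda^* B_i V_\lambda \to U^* B_i U$, and using boundedness of the net of unitaries to pass from strong$^*$ to ultraweak convergence before pairing with the extended functionals --- are exactly the ones you flag, and you handle them correctly.
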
 
\begin{proof}
Clearly
\[
\cconv (\U_{C(\X, \M_n)}(\vec{B})) \subseteq \cconv (\U_{\fM \otimes \M_n}(\vec{B})).
\]
To see the other inclusion, let $\vec{B} = (B_1, \ldots, B_m)$ and let 
\[
\vec{C} = (C_1, \ldots, C_m) \in \cconv (\U_{\fM \otimes \M_n}(\vec{B}))  \cap C(\X, \M_n)^m.
\]
Given $\epsilon > 0$, this implies there exists a $k \in \bN$, a probability vector $\vec{t} \in \bR^k$, and unitaries $U_1,\ldots, U_k \in \fM \otimes \M_n$ such that
\[
\left\|(C_1,\ldots, C_m) - \sum^k_{i=1} t_i (U_i^* B_1U_i, \ldots, U^*_i B_mU_i) \right\|_\infty < \epsilon.
\]
Since $C(\X, \M_n)$ is dense in $\fM \otimes \M_n$ with respect to the strong operator topology, the Kaplansky density theorem for unitaries implies for $1 \leq i \leq k$ there exists nets $(U_{i,\lambda})_{\lambda \in \Lambda}$ of unitaries in $C(\X, \M_n)$ that converge in the ultrastrong$^*$ operator topology on $\fM \otimes \M_n$ to $U_i$ respectively.  Hence, by sending $\epsilon$ to zero, we see that the product ultrastrong$^*$-closure of
\[
Y = \left\{\left.  (C_1,\ldots, C_m) - \sum^k_{i=1} t_i (U_{i, \lambda}^* B_1U_{i, \lambda}, \ldots, U^*_{i, \lambda} B_mU_{i, \lambda}) \, \right| \, \lambda \in \Lambda \right\} \subseteq C(\X, \M_n)^m
\]
contains the zero vector $\vec{0}$.

As the ultrastrong$^*$ and ultraweak topologies on $(\fM \otimes \M_n)^m$ have the same continuous linear functionals and thus have the same closed convex sets by the Hahn Banach Theorem, there is a net in $\conv(Y)$ that converges to $\vec{0}$ in the ultraweak topology on $(\fM \otimes \M_n)^m$.  However, since the ultraweak topology on $(\fM \otimes \M_n)^m$ is the weak$^*$-topology generated by $C(\X, \M_n)^*$, since the weak$^*$-topology on $(\fM \otimes \M_n)^m$ generated by $C(\X, \M_n)^*$ when restricted to $C(\X, \M_n)^m$ is the weak topology on $C(\X, \M_n)^m$, and since $Y \subseteq C(\X, \M_n)^m$, there is a net in $\conv(Y)$ that converges to $\vec{0}$ in the weak topology on $C(\X, \M_n)^m$.  Therefore, since the Hahn Banach Theorem implies the weak and norm closures of $\conv(Y)$ in $C(\X, \M_n)^m$ agree, $\vec{0}$ is in the norm closure of $\conv(Y)$ in $C(\X, \M_n)^m$.  Since every element of $\conv(Y)$ is of the form $\vec{C} - \vec{T}$ where $\vec{T} \in \conv(\U_{C(\X, \M_n)}(\vec{B}))$, $\vec{C} \in \cconv (\U_{C(\X, \M_n)}(\vec{B}))$ as desired.
\end{proof}

Since $\fM$ is a abelian von Neumann algebra, $\fM$ is isomorphic to $L_\infty(\Omega, \mu)$ for some measure space $(\Omega, \mu)$.  Consequently $L_\infty(\Omega, \mu) \otimes \M_n$ (which one may view as $\M_n$-valued essentially bounded measurable functions) is the second continuous dual space of $C(\X, \M_n)$.  Using this, the proof of Theorem \ref{main_theorem} can now proceed.

\begin{proof}[Proof of Theorem \ref{main_theorem}]
By Proposition \ref{prop:elementary-equivalences}, it suffices to prove that $\vec{A} \prec_\tr \vec{B}$ implies $\vec{A} \in \cconv (\U_{C(\X, \M_n)}(\vec{B}))$.  Suppose $\vec{A} \prec_\tr \vec{B}$ (using only traces in $C(\X, \M_n)$).  To see that $\vec{A} \in \cconv (\U_{C(\X, \M_n)}(\vec{B}))$ it suffices by Lemma \ref{lem:suffices-to-consider-VN-alg} to show that $\vec{A} \in \cconv (\U_{\fM \otimes \M_n}(\vec{B}))$.  Let $\epsilon > 0$.  

By \cite{K1984}*{Theorem 3.19}, $\{A_1,\ldots, A_m\}$ and $\{B_1,\ldots, B_m\}$ are simultaneously diagonalizable in $L_\infty(\Omega, \mu) \otimes \M_n$.  Hence there exists unitaries $U, V \in L_\infty(\Omega, \mu) \otimes \M_n$ and functions 
\[
\{f_{j, k}, g_{j,k}\, \mid \, 1 \leq j \leq m, 1 \leq k \leq n\} \subseteq L_\infty(\Omega, \mu)
\]
such that
\[
UA_jU^* = \diag(f_{j,1},\ldots, f_{j,n}) \qand VB_jV^* = \diag(g_{j,1},\ldots, g_{j,n})
\]
for all $1 \leq j \leq m$.

Since $C(\X, \M_n) \subseteq L_\infty(\Omega, \mu) \otimes \M_n$, every trace on $L_\infty(\Omega, \mu) \otimes \M_n$ restricts to a trace on $C(\X, \M_n)$.  Consequently, since $\vec{A} \prec_\tr \vec{B}$ (using only traces in $C(\X, \M_n)$) and since only a countable number of continuous convex functions need to be considered in tracial majorization (see Lemma \ref{lem:approx-convex-from-below-using-hyperplanes}), it must be the case that $\vec{A}(y) \prec_\tr \vec{B}(y)$ for almost every $y \in \Omega$.

Without loss of generality, we may assume $\vec{A}(y) \prec_\tr \vec{B}(y)$ for all $y \in \Omega$.  Hence Theorem \ref{thm:matrix-only-result} implies that $\vec{A}(y) \prec \vec{B}(y)$ for all $y\in \Omega$.  Therefore, for all $y \in \Omega$ there exists a $X(y) \in DS_n$ such that
\[
X(y) (g_{j,1}(y), \ldots, g_{j,n}(y))^T = (f_{j,1}(y), \ldots, f_{j,n}(y))^T
\]
for all $1 \leq j \leq m$.  

Since the maps $y \mapsto (g_{j,1}(y), \ldots, g_{j,n}(y))$ and $y \mapsto (f_{j,1}(y), \ldots, f_{j,n}(y))$ are measurable for all $1 \leq j \leq m$ and bounded entrywise by the operator norms of $\vec{A}$ and $\vec{B}$, there exists a partition $\Omega_1 \cup \cdots \cup \Omega_\ell$ of $\Omega$ into pairwise disjoint measurable sets such that for all $1 \leq p \leq \ell$ and $y_1, y_2 \in \Omega_p$ we have
\begin{align*}
\left\| (g_{j,1}(y_1), \ldots, g_{j,n}(y_1)) - (g_{j,1}(y_2), \ldots, g_{j,n}(y_2))\right\|_\infty & < \epsilon \text{ and}\\
\left\| (f_{j,1}(y_1), \ldots, f_{j,n}(y_1)) - (f_{j,1}(y_2), \ldots, f_{j,n}(y_2))\right\|_\infty & < \epsilon
\end{align*}
for all $1 \leq j \leq m$.  

For each $1 \leq p \leq \ell$, choose a $y_p \in \Omega_p$.  Since  $X(y_p)$ is doubly stochastic, by Theorem \ref{thm:matrix-only-result} there exists a probability vector $\vec{t}_p \in \bR^{k_p}$ and unitary matrices $W_{p, 1}, \ldots, W_{p, k_p} \in \M_n$ such that
\begin{align*}
&\left\|\diag(f_{j,1}(y_p), \ldots, f_{j,n}(y_p)) -  \sum^{k_p}_{i=1} t_{p,i} W_{p,i}^* \diag(g_{j,1}(y_p), \ldots, g_{j,n}(y_p)) W_{p,i}\right\|< \epsilon.
\end{align*}
Hence, by simple norm approximations, it is elementary to see that
\begin{align*}
&\left\|\diag(f_{j,1}(y), \ldots, f_{j,n}(y)) -  \sum^{k_p}_{i=1} t_{p,i} W_{p,i}^* \diag(g_{j,1}(y), \ldots, g_{j,n}(y)) W_{p,i}\right\|< 3\epsilon
\end{align*}
for all $y \in \Omega_p$ and $1 \leq j \leq m$ (i.e. conjugation by unitaries preserves the norm and the norm of the convex combination is bounded above by the convex combination of the norms).  Moreover, by the same arguments as used in the proof of Theorem \ref{finite_main_thm}, we may assume without loss of generality that $k_1 = \ldots = k_\ell = k$ and $\vec{t}_1 = \cdots = \vec{t}_\ell = \vec{t} \in \bR^k$.

For each $1 \leq i \leq k$, let $W_i : \Omega \to \M_n$ be defined by
\[
W_i(y) = W_{p,i} \quad \text{for all }y \in \Omega_p.
\]
Clearly $W_i \in L_\infty(\Omega, \mu) \otimes \M_n$ as each $\Omega_p$ is measurable and $W_i$ is a unitary for all $1 \leq i \leq k$.  Moreover
\[
\left\|\diag(f_{j,1}(y), \ldots, f_{j,n}(y)) -  \sum^{k}_{i=1} t_i W_{i}^* \diag(g_{j,1}(y), \ldots, g_{j,n}(y)) W_{i}\right\| \leq 2\epsilon
\]
for all $y \in \Omega$ and $1 \leq j \leq m$.  Hence we see that
\[
\left\| A_j - \sum^{k}_{i=1} t_i (V^* W_{i}U)^*  B_j (V^* W_{i}U)\right\| \leq 2\epsilon
\]
for all $1 \leq j \leq m$.  Thus $\vec{A} \in \cconv (\U_{\fM \otimes \M_n}(\vec{B}))$.
\end{proof}

\section{Joint Majorization in Subhomogeneous C$^*$-Algebras}
\label{sec:subhomo}

Recall a  C$^*$-algebra $\fA$ is subhomogeneous if there exists an $N \in \bN$ and a faithful representation of $\fA$ that consists of a direct sum of representations onto matrix algebras of size at more $N$.  In particular, for a compact metric space $\X$, $C(\X, \M_N)$ is a subhomogeneous C$^*$-algebra.

The goal of this section is to extend Theorem \ref{main_theorem} to subhomogeneous C$^*$-algebras.  Of course pointwise doubly stochastic majorization does not make sense in this context.  However, tracial majorization does and is equivalent to pointwise doubly stochastic majorization for $C(\X, \M_n)$.  As the proof that $\vec{A} \in \cconv(\U(\vec{B}))$ implies $\vec{A} \prec_\tr \vec{B}$ in Proposition \ref{prop:elementary-equivalences} makes use of doubly stochastic matrices which do not exist in this context, an alternative proof that works for every C$^*$-algebra is the first order of business.  First we reduce the number of continuous convex functions that need to be considered in tracial majorization.

\begin{lem}
\label{lem:approx-convex-from-below-using-hyperplanes}
Let $K$ be a compact convex subset of $\bR^m$ and let $f : K \to [0, \infty)$ be a continuous convex function.  For all $\epsilon > 0$ there exists an $\ell \in \bN$ and functions $g_1, \ldots, g_\ell : \bR^m \to \bR$ of the form
\[
g_i(x_1, \ldots, x_m) = a_{i,1} x_1 + \cdots + a_{i,m} x_m + b_j
\]
for $a_{i,}, \ldots, a_{i,m}, b_i \in \bR$ such that if $g : K \to \bR$ is defined by
\[
g(\vec{x}) = \max\{0, g_1(\vec{x}), \ldots, g_\ell(\vec{x})\},
\]
then $f-\epsilon \leq g  \leq f $ on $K$.
\end{lem}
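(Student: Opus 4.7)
The plan is the standard one for representing a continuous convex function as a supremum of affine minorants: for each point $x_0 \in K$ produce an affine function below $f$ that is close to $f$ at $x_0$ via a separation argument on the epigraph, then use compactness of $K$ to reduce to finitely many such affine functions and take their maximum (together with $0$).

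For the per-point construction, fix $x_0 \in K$ and consider the epigraph $E = \{(x,t) \in K \times \bR : t \geq f(x)\}$, which is a closed convex subset of $\bR^{m+1}$ since $K$ is closed and $f$ is continuous. The point $(x_0, f(x_0) - \epsilon/2)$ does not lie in $E$, so the Hahn--Banach separation theorem produces $(\alpha, \beta) \in \bR^m \times \bR$ and $\gamma \in \bR$ with $\alpha \cdot x + \beta t \geq \gamma$ on $E$ and $\alpha \cdot x_0 + \beta(f(x_0) - \epsilon/2) < \gamma$. Letting $t \to \infty$ in the first inequality forces $\beta \geq 0$, and the case $\beta = 0$ is excluded because plugging the point $(x_0, f(x_0)) \in E$ into the first inequality would give $\alpha \cdot x_0 \geq \gamma$ while the second inequality gives $\alpha \cdot x_0 < \gamma$. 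Dividing through by $\beta > 0$ then yields an affine function $g_{x_0}(x) = \gamma/\beta - (\alpha/\beta) \cdot x$ on $\bR^m$ satisfying $g_{x_0} \leq f$ on $K$ and $g_{x_0}(x_0) > f(x_0) - \epsilon/2$.

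To assemble the global approximation, continuity of $f - g_{x_0}$ provides an open neighborhood $U_{x_0}$ of $x_0$ in $K$ on which $g_{x_0} > f - \epsilon$. By compactness of $K$, finitely many such neighborhoods, centered at points $x_1, \ldots, x_\ell$, cover $K$. Setting $g_i = g_{x_i}$ and $g = \max\{0, g_1, \ldots, g_\ell\}$, the upper bound $g \leq f$ on $K$ follows from $0 \leq f$ and each $g_i \leq f$, while for the lower bound each $x \in K$ lies in some $U_{x_i}$ so that $g(x) \geq g_i(x) > f(x) - \epsilon$.

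The only delicate step I foresee is the non-verticality of the separating hyperplane in the per-point construction (ruling out $\beta = 0$). This is precisely why the separation point is chosen at $(x_0, f(x_0) - \epsilon/2)$: having $(x_0, f(x_0))$ already inside the epigraph at the same $x$-coordinate makes $\beta > 0$ immediate. Everything else amounts to a routine finite covering argument and checking that a finite maximum of affine minorants remains below $f$.
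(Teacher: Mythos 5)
Your proof is correct and follows essentially the same route as the paper: separate the point $(x_0, f(x_0)-\epsilon/2)$ from the closed convex epigraph via Hahn--Banach to get an affine minorant accurate at $x_0$, then use continuity and compactness of $K$ to pass to a finite maximum. You in fact spell out the non-verticality of the separating hyperplane ($\beta>0$), a detail the paper leaves implicit, so nothing is missing.
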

\begin{proof}
Since $f$ is convex and continuous, the set
\[
C = \{(x_1, \ldots, x_m,y) \in \bR^{m+1} \, \mid \, (x_1, \ldots, x_m) \in K, y \geq  f(x_1, \ldots, x_m)\}
\]
is a closed convex set.  The Hahn-Banach Theorem implies for each $\vec{y} \in K$ there exists $a_{\vec{y},1}, \ldots, a_{\vec{y},m}, b_{\vec{y}} \in \bR$ such that if $g_{\vec{y}} : K \to \bR$ is defined by
\[
g_{\vec{y}}(x_1, \ldots, x_m) = a_{\vec{y},1} x_1 +  \cdots +  a_{\vec{y},m}x_m + b_{\vec{y}}
\]
then $g_{\vec{y}} \leq f$ on $K$ and $f(\vec{y}) \leq g_{\vec{y}}(\vec{y}) + \frac{\epsilon}{2}$.  As both $f$ and $g_{\vec{y}}$ are continuous, there exists an open neighbourhood $U_{\vec{y}}$ of $\vec{y}$ such that $f(\vec{z}) \leq g_{\vec{y}}(\vec{z}) + \epsilon$ for all $\vec{z} \in U_{\vec{y}}$.  The compactness of $K$ then yields the result.
\end{proof}

The following is the essential part in proving $\vec{A} \in \cconv(\U(\vec{B}))$ implies $\vec{A} \prec_{\tr} \vec{B}$ in any unital C$^*$-algebra.

\begin{lem}
\label{lem:max-lemma}
Let $\fA$ be a unital C$^*$-algebra and let $\vec{A} = (A_1, \ldots, A_m)$ and $\vec{B} = (B_1, \ldots, B_m)$ be abelian families in $\fA$.  If $\vec{A} \in \cconv(\U(\vec{B}))$ then
\[
\tau(\max\{0, A_1, \ldots, A_m\}) \leq \tau(\max\{0, B_1, \ldots, B_m\}).
\]
for all tracial states $\tau$ on $\fA$.
\end{lem}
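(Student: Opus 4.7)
The plan is to first establish an auxiliary tracial domination principle and then derive the lemma by an $\ep$-approximation. The auxiliary principle to prove is this: if $A_1, \ldots, A_m \in \fA$ are commuting self-adjoint operators and $T \in \fA$ is any self-adjoint operator with $T \geq 0$ and $T \geq A_j$ for every $1 \leq j \leq m$, then $\tau(T) \geq \tau(\max\{0, A_1, \ldots, A_m\})$ for every tracial state $\tau$ on $\fA$. The corresponding operator inequality $T \geq \max\{0, A_1, \ldots, A_m\}$ fails in general (already in $\M_2$, one can take $A_1 = \diag(1,0)$, $A_2 = \diag(0,1)$, and $T$ with suitable off-diagonal entries), so the auxiliary principle is genuinely tracial in nature; its proof will be the main obstacle.

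To prove the auxiliary principle, I would pass to the GNS representation $\pi_\tau$ associated with $\tau$ and work inside the finite von Neumann algebra $\fN = \pi_\tau(\fA)''$, where $\tau$ extends to a normal tracial state $\bar\tau$ and the commuting family $\pi_\tau(A_1), \ldots, \pi_\tau(A_m)$ admits a joint Borel functional calculus. Setting $A_0 := 0$, partition the joint spectrum into Borel sets $E_0, E_1, \ldots, E_m$ according to which index $j \in \{0, 1, \ldots, m\}$ is the smallest one realizing $\max\{0, x_1, \ldots, x_m\}$, and let $P_j \in \fN$ be the associated spectral projection. These projections are mutually orthogonal, sum to the identity, and commute with every $\pi_\tau(A_i)$ (they lie in the abelian von Neumann algebra generated by $\pi_\tau(A_1), \ldots, \pi_\tau(A_m)$). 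By construction $\pi_\tau(A_j) P_j = \pi_\tau(\max\{0, A_1, \ldots, A_m\}) P_j$ for $j \geq 1$, while $\pi_\tau(\max\{0, A_1, \ldots, A_m\}) P_0 = 0$, and consequently $\pi_\tau(\max\{0, A_1, \ldots, A_m\}) = \sum_{j=1}^m \pi_\tau(A_j) P_j$.

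The computation now proceeds via traciality. Identifying $T$ and the $A_j$ with their images under $\pi_\tau$,
\[
\bar\tau(T) \;=\; \sum_{j=0}^m \bar\tau(P_j T) \;=\; \sum_{j=0}^m \bar\tau(P_j T P_j),
\]
using $P_j^2 = P_j$ and $\bar\tau(P_j T) = \bar\tau(P_j T P_j)$ by traciality. For $j \geq 1$, $P_j T P_j \geq P_j A_j P_j = A_j P_j$ (using $T \geq A_j$ and that $P_j$ commutes with $A_j$), and $P_0 T P_0 \geq 0$ from $T \geq 0$. Summing,
\[
\bar\tau(T) \;\geq\; \sum_{j=1}^m \bar\tau(A_j P_j) \;=\; \bar\tau\bigl(\max\{0, A_1, \ldots, A_m\}\bigr) \;=\; \tau\bigl(\max\{0, A_1, \ldots, A_m\}\bigr),
\]
which is the auxiliary principle.

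With the auxiliary principle in hand, the lemma follows by $\ep$-approximation. For $\ep > 0$, use $\vec{A} \in \cconv(\U(\vec{B}))$ to find a probability vector $\vec{t} \in \bR^k$ and unitaries $U_1, \ldots, U_k \in \fA$ with $\|A_j - \sum_{i=1}^k t_i U_i^* B_j U_i\| < \ep$ for every $1 \leq j \leq m$. Set $M = \max\{0, B_1, \ldots, B_m\}$ and $T = \sum_{i=1}^k t_i U_i^* M U_i$. Since $M \geq 0$ and $M \geq B_j$, conjugation and convex combination preserve these inequalities, so $T \geq 0$ and $T \geq \sum_i t_i U_i^* B_j U_i \geq A_j - \ep \cdot 1$, whence $T + \ep \cdot 1 \geq A_j$ and $T + \ep \cdot 1 \geq 0$. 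Applying the auxiliary principle to $T + \ep \cdot 1$ and the family $A_1, \ldots, A_m$, and using the trace-invariance of $\tau$ under unitaries to compute $\tau(T) = \tau(M)$, we obtain $\tau(M) + \ep \geq \tau(\max\{0, A_1, \ldots, A_m\})$. Letting $\ep \to 0$ yields the desired inequality.
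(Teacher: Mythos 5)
Your proposal is correct, and it handles the crucial step by a genuinely different mechanism than the paper. Both arguments share the same outer structure: from $\vec{A} \in \cconv(\U(\vec{B}))$ extract a convex combination $T = \sum_i t_i U_i^* M U_i$ of unitary conjugates of $M = \max\{0, B_1, \ldots, B_m\}$, observe that $\tau(T) = \tau(M)$, that $T \geq 0$, and that $A_j - \ep I \leq T$, and then reduce everything to a tracial domination principle: any positive operator dominating each $A_j$ has trace at least $\tau(\max\{0, A_1, \ldots, A_m\})$. The paper proves this principle (up to $\ep$'s) without ever leaving the C$^*$-algebra: since spectral projections need not exist there, it substitutes a continuous partition of unity $\{T_j\}$ in $C^*(A_1, \ldots, A_m) \cong C(K)$ subordinate to the open sets where the $j^{\mathrm{th}}$ coordinate approximately attains the maximum, together with auxiliary positive contractions $S_j$ and trace cyclicity, at the cost of the $2\ep/3\ep$ bookkeeping. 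You instead pass through the GNS representation of $\tau$ into the finite von Neumann algebra $\pi_\tau(\fA)''$, where the joint spectral measure of the commuting family yields exact projections $P_j$ (indexed by which coordinate attains the maximum), and the identities $\bar\tau(P_j T) = \bar\tau(P_j T P_j)$ and $P_j(T - A_j)P_j \geq 0$ finish the estimate with no further approximation. Your route is cleaner and makes transparent that the statement is genuinely tracial rather than an operator inequality (your $\M_2$ aside is right; e.g. $T = \left[\begin{smallmatrix} 2 & \sqrt{2} \\ \sqrt{2} & 2 \end{smallmatrix}\right]$ dominates $\diag(1,0)$, $\diag(0,1)$, and $0$ but not $I$), while the paper's route buys a purely C$^*$-algebraic proof that avoids enveloping von Neumann algebras and Borel functional calculus. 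If you write yours up, the only points to state explicitly are the standard facts that the vector state on $\pi_\tau(\fA)''$ is a normal tracial extension of $\tau$ and that $\pi_\tau$ commutes with multivariate continuous functional calculus, so $\pi_\tau(\max\{0, A_1, \ldots, A_m\}) = \max\{0, \pi_\tau(A_1), \ldots, \pi_\tau(A_m)\}$.
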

\begin{proof}
Fix a tracial state $\tau$ on $\fA$.  Let $\A = C^*(A_1,\ldots, A_n) \subseteq \fA$ and let
\[
A' = \max\{0, A_1, \ldots, A_m\} \qqand B' = \max\{0, B_1, \ldots, B_m\} .
\]
Note $\A$ is an abelian C$^*$-algebra and there is an isomorphism $\pi : \A \to C(K)$ where $K \subseteq \bR^m$ is a compact subset and $\pi(A_j) = x_j$ (the projection onto the $j^{\mathrm{th}}$ coordinate) for all $1 \leq j \leq m$.

To see that $\tau(A') \leq \tau(B')$, let $\epsilon > 0$.  Since $\vec{A} \in \cconv(\U(\vec{B}))$ there exists an $\ell \in \bN$, a probability vector $\vec{t} \in \bR^\ell$, and unitaries $U_1, \ldots, U_\ell \in \U(\fA)$ such that
\[
\left\|A_j - \sum^\ell_{k=1} t_k U^*_k B_j U_k\right\| < \epsilon
\]
for all $1 \leq j \leq m$.  Let
\[
C = \sum^\ell_{k=1} t_k U^*_k B' U_k.
\]
Thus $C \geq 0$ (as $B' \geq 0$),
\[
\tau(C) = \sum^\ell_{k=1} t_k \tau(U^*B'U) = \sum^\ell_{k=1} t_k \tau(B') = \tau(B'),
\]
and
\begin{align*}
A_j - \epsilon I \leq \sum^\ell_{k=1} t_k U^*_k B_j U_k \leq \sum^\ell_{k=1} t_k U^*_k B' U_k = C
\end{align*}
for all $1 \leq j \leq m$.

Since $\A$ is isomorphic to $C(K)$, for each $1 \leq j \leq m$ there exists a positive contraction $S_j \in \A$ such that
\[
S_j (A_j - \epsilon I) = \max\left\{A_j - 2\epsilon, 0\right\}.
\]
For each $1 \leq j \leq m$ let
\[
V_j = \left\{(x_1, \ldots, x_m) \in K \, \left| \, x_j - 2 \epsilon > \max\{x_1, \ldots, x_m\} - 3\epsilon\right.\right\}.
\]
Clearly $\{V_j\}^m_{j=1}$ is a open cover of $K$ so there exists a partition of unity $\{f_j\}^m_{j=1}$ of $K$ subordinate to the covering $\{V_j\}^m_{j=1}$.  Thus if 
\[
T_j = f_j( \max\{0, A' - 3\epsilon\})
\]
for all $1 \leq j \leq m$, then $T_j \geq 0$ for all $1 \leq j \leq m$ and
\[
\sum^m_{j=1}T_j = I.
\]
Moreover, notice by the definition of $V_j$ that
\[
T_j^\frac{1}{2}  \max\{0, A' - 3\epsilon\} T_j^\frac{1}{2} \leq T_j^\frac{1}{2} \max\left\{0, A_j- 2\epsilon\right\} T_j^\frac{1}{2} = T_j^\frac{1}{2} S_j^\frac{1}{2} (A_j - \epsilon I) S_j^\frac{1}{2} T_j^\frac{1}{2}
\]
for all $1\leq j \leq m$.  Therefore, since $A_j - \epsilon I \leq C$ implies
\[
T_j^\frac{1}{2} S_j^\frac{1}{2} (A_j - \epsilon I) S_j^\frac{1}{2} T_j^\frac{1}{2} = S_j^\frac{1}{2} T_j^\frac{1}{2} (A_j - \epsilon) T_j^\frac{1}{2} S_j^\frac{1}{2} \leq S_j^\frac{1}{2} T_j^\frac{1}{2} C T_j^\frac{1}{2} S_j^\frac{1}{2}
\]
for all $1 \leq j \leq m$, we obtain that
\begin{align*}
\tau(\max\{0, A' -3 \epsilon\}) & = \sum^m_{j=1} \tau \left( T_j^\frac{1}{2}  \max\{0, A' - 3\epsilon\} T_j^\frac{1}{2}  \right) \\
& \leq \sum^m_{j=1} \tau\left(S_j^\frac{1}{2} T_j^\frac{1}{2} C T_j^\frac{1}{2} S_j^\frac{1}{2}\right) \\
& = \sum^m_{j=1} \tau\left(C^\frac{1}{2} T_j^\frac{1}{2} S_j T_j^\frac{1}{2} C^\frac{1}{2}\right) \\
& \leq \sum^m_{j=1} \tau\left(C^\frac{1}{2} T_j C^\frac{1}{2}\right) \\
&= \tau(C) = \tau(B').
\end{align*}
Therefore, as $\max\{0, A' -3 \epsilon\}$ increases to $A'$ in norm as $\epsilon$ tends to 0, the continuity of $\tau$ yield the result.
\end{proof}

\begin{thm}
\label{thm:joint-major-implies-tracial-major}
Let $\fA$ be a unital C$^*$-algebra and let $\vec{A} = (A_1, \ldots, A_m)$ and $\vec{B} = (B_1, \ldots, B_m)$ be abelian families in $\fA$.  If  $\vec{A} \in \cconv(\U(\vec{B}))$ then $\vec{A} \prec_\tr \vec{B}$.
\end{thm}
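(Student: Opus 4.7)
The plan is to leverage the two preceding lemmas to bootstrap the inequality for maxima established in Lemma \ref{lem:max-lemma} into the general tracial majorization inequality. Fix a tracial state $\tau$ on $\fA$ and a continuous convex function $f : \bR^m \to \bR$. Since the joint spectra of $\vec{A}$ and $\vec{B}$ are contained in a common compact box $K \subseteq \bR^m$ and $f$ is bounded below on $K$, I may add a sufficiently large constant $c$ to $f$ (this shifts $\tau(f(\vec A))$ and $\tau(f(\vec B))$ by the same amount) and assume without loss of generality that $f \geq 0$ on $K$.

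Given $\epsilon > 0$, Lemma \ref{lem:approx-convex-from-below-using-hyperplanes} applied to $f|_K$ produces affine functions $g_1, \ldots, g_\ell : \bR^m \to \bR$ such that the continuous convex function $g(\vec x) = \max\{0, g_1(\vec x), \ldots, g_\ell(\vec x)\}$ satisfies $f - \epsilon \leq g \leq f$ on $K$. Next I would observe that the tuple $(g_1(\vec{A}), \ldots, g_\ell(\vec{A}))$ lies in $\cconv(\U((g_1(\vec{B}), \ldots, g_\ell(\vec{B}))))$: if $\vec A$ is approximated in norm by expressions of the form $\sum_k t_k U_k^* \vec B U_k$, then applying the affine map $g_i$ componentwise and pulling the unitary conjugations and convex combinations through (which commute with affine maps) yields that $g_i(\vec A)$ is approximated by $\sum_k t_k U_k^* g_i(\vec B) U_k$ using the \emph{same} probability vector and unitaries.

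Lemma \ref{lem:max-lemma} applied to this new pair of abelian families then gives
\[
\tau\bigl(\max\{0, g_1(\vec A), \ldots, g_\ell(\vec A)\}\bigr) \leq \tau\bigl(\max\{0, g_1(\vec B), \ldots, g_\ell(\vec B)\}\bigr).
\]
Via the Gelfand isomorphism for $C^*(A_1,\ldots,A_m)$ (and similarly for $\vec B$), the left side equals $\tau(g(\vec A))$ and the right equals $\tau(g(\vec B))$. Combined with the pointwise bound $f - \epsilon \leq g \leq f$ on the relevant joint spectra and the monotonicity of $\tau$ on self-adjoint operators, I obtain
\[
\tau(f(\vec A)) - \epsilon \leq \tau(g(\vec A)) \leq \tau(g(\vec B)) \leq \tau(f(\vec B)),
\]
and letting $\epsilon \to 0$ finishes the proof.

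Since the essential analytic work has been packaged into Lemma \ref{lem:max-lemma}, I do not anticipate any substantial obstacle beyond that already executed step. The one conceptual point worth emphasizing is that affine functions of abelian families interact cleanly with convex combinations and unitary conjugation, which is precisely what allows Lemma \ref{lem:max-lemma} to be applied to the transformed tuples; Lemma \ref{lem:approx-convex-from-below-using-hyperplanes} then serves as the bridge turning the classical principle \emph{``a non-negative continuous convex function is the supremum of its non-negative affine minorants''} into a usable operator-algebraic approximation.
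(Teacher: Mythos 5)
Your proposal is correct and follows essentially the same route as the paper: approximate the convex function from below by a maximum of affine functions (Lemma \ref{lem:approx-convex-from-below-using-hyperplanes}), note that applying those affine maps to the tuples preserves membership in the closed convex hull of the joint unitary orbit (with the same probability vectors and unitaries, the error only scaling by a fixed constant), and then invoke Lemma \ref{lem:max-lemma} together with monotonicity of the trace. Your explicit handling of the shift making $f$ nonnegative and of the identification $\max\{0,g_1(\vec A),\ldots,g_\ell(\vec A)\}=g(\vec A)$ via the Gelfand isomorphism are details the paper leaves implicit, but the argument is the same.
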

\begin{proof}
Suppose  $\vec{A} \in \cconv(\U(\vec{B}))$.
Note if $[a_{i,j}] \in \M_{\ell \times m}$ and we define
\begin{align*}
\vec{A}' &= (a_{1,1} A_1 + \cdots + a_{1,m} A_m, \ldots, a_{\ell,1} A_1 + \cdots + a_{\ell,m} A_m) \text{ and}\\
\vec{B}' &= (a_{1,1} B_1 + \cdots + a_{1,m} B_m, \ldots, a_{\ell,1} B_1 + \cdots + a_{\ell,m} B_m),
\end{align*}
then $\vec{A}'$ and $\vec{B}'$ are abelian families in $\fA$ such that  $\vec{A}' \in \cconv(\U(\vec{B}'))$.  Hence Lemma \ref{lem:max-lemma} implies that
\[
\tau(g(\vec{A})) \leq \tau(g(\vec{B}))
\]
for all tracial states $\tau$ and all functions $g$ as in the conclusions of Lemma \ref{lem:approx-convex-from-below-using-hyperplanes}.  Hence Lemma \ref{lem:approx-convex-from-below-using-hyperplanes} (along with the fact that only continuous convex functions on the combined joint spectra need to be considered) completes the proof.
\end{proof}

Using Theorem \ref{thm:joint-major-implies-tracial-major} and the same ideas as used in the proof of Theorem \ref{main_theorem}, we have the following result for subhomogeneous C$^*$-subalgebras.

\begin{thm}\label{thm:subhomo}
Let $\fA$ be a separable subhomogeneous C$^*$-subalgebra.  Suppose $\vec{A} = (A_1, \ldots, A_m)$ and $\vec{B} = (B_1, \ldots, B_m)$ are abelian families in $\fA$.  Then the following are equivalent:
\begin{enumerate}
\item $\vec{A} \prec_\tr \vec{B}$.
\item $\vec{A} \in \cconv(\U(\vec{B}))$.
\end{enumerate}
\end{thm}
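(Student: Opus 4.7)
The implication (2) $\Rightarrow$ (1) is Theorem \ref{thm:joint-major-implies-tracial-major}, so I focus on (1) $\Rightarrow$ (2). The plan is to reproduce the strategy of Theorem \ref{main_theorem} in the second dual $\fA^{**}$. First I would establish the direct analog of Lemma \ref{lem:suffices-to-consider-VN-alg} for a general unital C$^*$-algebra,
$$\cconv(\U_{\fA}(\vec{B})) = \cconv(\U_{\fA^{**}}(\vec{B})) \cap \fA^m,$$
whose proof is essentially word-for-word the same as that of Lemma \ref{lem:suffices-to-consider-VN-alg}: Kaplansky density for unitaries, Hahn--Banach to identify ultrastrong$^*$ and ultraweak closed convex sets, and the fact that the weak topology on $\fA^m$ is the restriction of the ultraweak topology on $(\fA^{**})^m$.

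Since $\fA$ is $N$-subhomogeneous, every factor representation of $\fA$ has dimension at most $N$, whence
$$\fA^{**} = \bigoplus_{k=1}^{N} Z_k \otimes \M_k, \qquad Z_k \cong L_\infty(\Omega_k, \mu_k),$$
with each $Z_k$ an abelian von Neumann algebra. I would decompose $\vec{A} = \bigoplus_k \vec{A}^{(k)}$ and $\vec{B} = \bigoplus_k \vec{B}^{(k)}$ componentwise and, using \cite{K1984}*{Theorem 3.19}, simultaneously diagonalize in each summand to obtain essentially bounded measurable eigenvalue functions $f^{(k)}_{j,i}$ and $g^{(k)}_{j,i}$ on each $\Omega_k$.

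The central step is translating $\vec{A} \prec_{\tr} \vec{B}$ in $\fA$ into pointwise matrix majorization almost everywhere in each $\Omega_k$. Because traces on $\fA$ are in bijection with normal traces on $\fA^{**}$, every tracial functional of the form $a \mapsto (\phi \otimes \tr_k)(a^{(k)})$, with $\phi$ an arbitrary normal state on $Z_k$, arises (up to normalization) from a trace on $\fA$. Combining this with Lemma \ref{lem:approx-convex-from-below-using-hyperplanes} to reduce to a countable collection of continuous convex functions, the hypothesis $\vec{A} \prec_{\tr} \vec{B}$ forces $\vec{A}^{(k)}(y) \prec_{\tr} \vec{B}^{(k)}(y)$ for a.e.\ $y \in \Omega_k$ and every $k$, and then Theorem \ref{thm:matrix-only-result} produces doubly stochastic matrices $X^{(k)}(y) \in DS_k$ relating the eigenvalue vectors pointwise.

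Fixing $\epsilon > 0$, I would then replicate the partition-and-approximation argument from the proof of Theorem \ref{main_theorem} within each summand: partition each $\Omega_k$ into finitely many pairwise disjoint measurable pieces on which the eigenvalue functions vary by less than $\epsilon$, pick a representative point in each piece, and apply Theorem \ref{thm:matrix-only-result} at that point to write the doubly stochastic relation as a convex combination of unitary conjugates. Applying Lemma \ref{finite_algo} to the finite collection of probability vectors (one per summand per partition piece) yields a common probability vector $\vec{t}$ and unitaries $W_i \in \fA^{**}$ with
$$\left\| A_j - \sum_{i} t_i\, W_i^* B_j W_i \right\| < 3\epsilon$$
for all $j$. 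Letting $\epsilon \to 0$ gives $\vec{A} \in \cconv(\U_{\fA^{**}}(\vec{B}))$, and the second-dual reduction concludes the argument. The main obstacle is the third step: identifying enough tracial states on $\fA$ to ``test'' majorization in each abelian summand of $\fA^{**}$, which requires careful use of the normal-trace correspondence together with the central decomposition of $\fA^{**}$.
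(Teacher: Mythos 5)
Your proposal is correct and follows essentially the same route as the paper: reduce to the enveloping von Neumann algebra $\bigoplus_{k\le N}\left(L_\infty(\Omega_k,\mu_k)\otimes \M_k\right)$ via the argument of Lemma \ref{lem:suffices-to-consider-VN-alg}, transfer tracial majorization to each summand by restricting (normal) traces and invoking Lemma \ref{lem:approx-convex-from-below-using-hyperplanes}, run the partition-and-approximation argument of Theorem \ref{main_theorem} componentwise, and merge the finitely many convex combinations as in Theorem \ref{finite_main_thm}. Your use of $\fA^{**}$ in place of the weak closure in a faithful subhomogeneous representation, and of Lemma \ref{finite_algo} for the merging step, are only cosmetic differences from the paper's proof.
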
 
\begin{proof}
Note $\vec{A} \in \cconv(\U(\vec{B}))$ implies $\vec{A} \prec_\tr \vec{B}$ by Theorem \ref{thm:joint-major-implies-tracial-major}.

Conversely, suppose $\vec{A} \prec_\tr \vec{B}$.  Since $\fA$ is a subhomogeneous C$^*$-subalgebra, there exists an $N \in \bN$ and a faithful representation of $\fA$ onto $\B(\H)$ for a separable $\H$ that consists of a direct sum of representations onto matrix algebras of size at more $N$.  The von Neumann algebra completion $\fM$ of $\fA$ in the image of this representation is then of the form
\[
\fM = (L_\infty(\Omega_1, \mu_1) \otimes \M_1) \oplus \cdots \oplus (L_\infty(\Omega_N, \mu_N) \otimes \M_N)
\]
for some (possibly empty) measure spaces $(\Omega_k, \mu_k)$ for $1\leq k \leq N$.  

By the same proof as used in Lemma \ref{lem:suffices-to-consider-VN-alg}, it suffices to prove that
\[
\vec{A} \in \cconv(\U_\fM(\vec{B})).
\]
Moreover, since $\vec{A} \prec_\tr \vec{B}$ in $\fA$, it follows that $\vec{A} \prec_\tr \vec{B}$ in $\fM$.  If for $1 \leq k \leq N$ the map $\pi_k : \fM \to  L_\infty(\Omega_k, \mu_k) \otimes \M_k$ denotes the projection onto the $k^{\mathrm{th}}$ component in the direct sum of $\fM$, it follows that $\pi_k(\vec{A}) \prec_\tr \pi_k(\vec{B})$ and the proof of Theorem \ref{main_theorem} yields
\[
\pi_k(\vec{A}) \in \cconv(\U_{ L_\infty(\Omega_k, \mu_k) \otimes \M_k}(\pi_k(\vec{B})))
\]
for all $1 \leq k \leq N$.  Thus, using Lemma \ref{lem:nice-prob-vector} to correctly divide the approximating convex combinations from the finitely many different components of $\fM$ into those of equal length and equal probability vectors as was done in Theorem \ref{finite_main_thm}, it follows that $\vec{A} \in \cconv(\U_\fM(\vec{B}))$ thereby yielding the result.
\end{proof}

\section{Additional Examples and Questions}
\label{sec:bonus}

Returning to the discussion of majorization in $C(\X, \M_n)$, there are a number of interesting (and likely difficult) questions that still need resolving.  For example:

\begin{ques}\label{ques:exact-for-all}
For which compact Hausdorff spaces $\X$ is it true that if $\vec{A} = (A_1,\ldots, A_m)$ and $\vec{B} = (B_1, \ldots, B_m)$ are CSD in $C(\X, \M_n)$ such that $\vec{A} \prec_c \vec{B}$ then $\vec{A} \in \conv(\U(\vec{B}))$?
\end{ques}

Recall if $\X$ has a finite number of points then $\vec{A} \prec_c \vec{B}$ implies $\vec{A} \in \conv(\U(\vec{B}))$ by Corollary \ref{cor:finte-X-all-good}, but Corollaries \ref{cor:cont-major-does-not-imply-exact-convex-hull} and \ref{cor:small-X-not-convex} give two examples of such $\X$ where $\vec{A} \prec_c \vec{B}$ does not imply $\vec{A} \in \conv(\U(\vec{B}))$.  However, these latter examples are not sub-Stonean.  This is of interest because \cite{GP1984} proved that abelian families in $C(\X, \M_n)$ are always CSD if and only if $\X$ is sub-Stonean with $\dim(\X) \leq 2$ and carries no non-trivial $G$-bundles over any closed subset for $G$ a symmetric group or the circle group.  It would be quite interesting if the answer to Question \ref{ques:exact-for-all} was the same condition.

Although Corollaries \ref{cor:cont-major-does-not-imply-exact-convex-hull} and \ref{cor:small-X-not-convex} demonstrate that $\vec{A} \prec_c \vec{B}$ does not $\vec{A} \in \conv(\U(\vec{B}))$ when $\vec{A}$ and $\vec{B}$ have length (greater than or equal to) 2, we were unable to find an example for single operator majorization.  In particular, the following question is still open:

\begin{ques}\label{ques:m=1?}
Let $A, B \in C(\X, \M_n)$ be self-adjoint and continuously diagonalizable.  Does $A \prec_c B$ imply that $A \in \conv(\U(B))$?
\end{ques}

Of course there are a number of barriers to overcome for an affirmative answer to Question \ref{ques:m=1?}: unitary orbits in infinite dimensional C$^*$-algebras are often not closed; the operators here need not be continuously diagonalizable unless $\X$ is sub-Stonean etc.; the number of unitaries used and probability vectors need not be well-behaved.  In terms of the last condition, note the technique developed in Lemma \ref{lem:nice-prob-vector} required finiteness and issues with the infinite number of probability vectors one needs to consider was the core of the issue in Corollary \ref{cor:cont-major-does-not-imply-exact-convex-hull}. However, as the following two results show, some of these might not be issues.

\begin{lem}\label{lem:nice-prob-vector}
Let $A, B \in \M_n$ be self-adjoint operators such that $A \prec B$.  Then there exists unitaries $U_1, \ldots, U_{2^{n-1}} \in \M_n$ such that
\[
A = \sum^{2^{n-1}}_{i=1} \frac{1}{2^{n-1}} U^*_i B U_i.
\]
\end{lem}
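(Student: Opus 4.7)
The plan is to reduce to the case where $A$ and $B$ are diagonal, and then induct on $n$, at each inductive step applying the two-unitary decomposition from the proof of Proposition \ref{M2_thm} to a single $2 \times 2$ block inside $\M_n$. Each inductive step doubles the number of unitaries, producing $2^{n-1}$ after $n-1$ steps. To reduce to the diagonal case, write $A = V_A D_A V_A^*$ and $B = V_B D_B V_B^*$ with $D_A = \diag(a_1, \ldots, a_n)$ and $D_B = \diag(b_1, \ldots, b_n)$ sorted in decreasing order; any identity $D_A = 2^{-(n-1)} \sum_i W_i^* D_B W_i$ transfers via $U_i = V_B W_i V_A^*$ to the desired decomposition of $A$. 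The base case $n = 2$ is immediate from the proof of Proposition \ref{M2_thm} specialized to a single point.

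For the inductive step, suppose the result holds in $\M_{n-1}$ and let $\vec{a} \prec \vec{b}$ in $\bR^n$ be sorted decreasingly. I would invoke a classical reduction from scalar majorization theory (see, e.g., Marshall, Olkin, and Arnold, \emph{Inequalities: Theory of Majorization and Its Applications}, 2nd ed., Chapter 2): one can find a $T$-transform $T = tI + (1-t)P_{i,j}$ with $t \in [0,1]$ (allowing $t = 1$, i.e.\ $T = I$) and an index $k$ such that $(T\vec{b})_k = a_k$ and the two vectors obtained from $\vec{a}$ and $T\vec{b}$ by deleting coordinate $k$ still satisfy majorization in $\bR^{n-1}$. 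The crucial local identity---which is the proof of Proposition \ref{M2_thm} applied inside the $(i,j)$-block of an $n \times n$ diagonal matrix $D$---is
\[
\diag(T\vec{d}) = \tfrac{1}{2}\bigl(W^* D W + \widetilde{W}^* D \widetilde{W}\bigr),
\]
where $W$ and $\widetilde{W}$ are the $n \times n$ unitaries equal to the identity off the $(i,j)$-block and equal to the rotations $\bigl(\begin{smallmatrix}\cos\theta & \sin\theta \\ -\sin\theta & \cos\theta\end{smallmatrix}\bigr)$ and $\bigl(\begin{smallmatrix}\cos\theta & -\sin\theta \\ \sin\theta & \cos\theta\end{smallmatrix}\bigr)$ inside it, with $\cos^2\theta = t$; the off-diagonal entries produced by the two conjugations inside the block carry opposite signs and cancel upon averaging, so the result is diagonal.

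Applying this identity with $D = D_B$ realizes $\diag(T\vec{b}) = \tfrac{1}{2}(W^* D_B W + \widetilde{W}^* D_B \widetilde{W})$. The inductive hypothesis applied to the reduced $(n-1)$-dimensional problem provides $2^{n-2}$ unitaries $V_1, \ldots, V_{2^{n-2}} \in \M_{n-1}$ that witness the reduced majorization, and these lift to $\widetilde{V}_s \in \M_n$ by acting as the identity on coordinate $k$. Substituting then yields
\[
D_A = 2^{-(n-1)} \sum_{s=1}^{2^{n-2}} \Bigl( (W\widetilde{V}_s)^* D_B (W\widetilde{V}_s) + (\widetilde{W}\widetilde{V}_s)^* D_B (\widetilde{W}\widetilde{V}_s) \Bigr),
\]
a uniform convex combination of $2 \cdot 2^{n-2} = 2^{n-1}$ unitary conjugates of $D_B$, as required. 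The main obstacle is the classical scalar reduction step, which requires some case analysis (handling $a_n = b_n$ trivially and, when $a_n > b_n$, choosing a $T$-transform on a suitable pair such as $(1,n)$ so that $(T\vec{b})_n = a_n$ and the remaining $n-1$ coordinates still satisfy majorization); the local two-unitary identity on a $2 \times 2$ block and the inductive composition producing exactly $2^{n-1}$ unitaries with uniform weights $2^{-(n-1)}$ are otherwise routine.
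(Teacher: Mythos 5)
Your proposal is correct in outline and is essentially the paper's own argument: the paper also reduces to diagonal matrices, invokes the classical $T$-transform chain (citing the proof of Theorem 1.3 in Ando's survey, which is the same Hardy--Littlewood--P\'olya/Marshall--Olkin--Arnold reduction you cite), realizes each $T$-transform as the average of two rotation conjugations with $\cos^2\theta = t$, and composes the at most $n-1$ steps (padding with identities) to get $2^{n-1}$ unitaries with uniform weights. Your induction-with-deletion phrasing versus the paper's explicit chain $\vec{c}_0 = \vec{b}, \ldots, \vec{c}_{n-1} = \vec{a}$ is only a cosmetic difference.

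One detail in your sketch of the scalar step is wrong, though the classical lemma you defer to is the right tool. You suggest, when $a_n > b_n$, applying a $T$-transform on the pair $(1,n)$ chosen so that $(T\vec{b})_n = a_n$; this need not preserve majorization. For example, with $\vec{b} = (20,6,0)$ and $\vec{a} = (16,5,5)$ one has $\vec{a} \prec \vec{b}$, but forcing the third coordinate to $5$ gives $T\vec{b} = (15,6,5)$, and $(15,6) $ does not majorize $(16,5)$ (indeed $T\vec{b}$ no longer majorizes $\vec{a}$ at all). The correct classical choice is the one the paper inherits from Ando: take $j$ the first index with $a_j < b_j$ and $k$ the first index after it with $a_k > b_k$, and transfer $\delta = \min(b_j - a_j,\, a_k - b_k)$, so that the matched coordinate is whichever endpoint is reached first (in the example above it is the first coordinate, matched to $16$, not the last). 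With that choice the reduced $(n-1)$-dimensional majorization does hold (your deletion of a common entry is justified, e.g., by the characterization of majorization via $t \mapsto \sum_i (x_i - t)^+$, or avoided entirely as in the paper by keeping full $n$-vectors and tracking how many entries of $\vec{c}_k$ agree with $\vec{a}$), and the rest of your argument goes through verbatim.
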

\begin{proof}
Since $A$ and $B$ are self-adjoint, there exist unitary operators $U,V \in \M_n$ such that
\[
A = U^* \diag(a_1, \ldots, a_n)U \qand B = V^* \diag(b_1, \ldots, b_n) V
\]
where $\vec{a} = (a_1, \ldots, a_n) \in \bR^n$ and $\vec{b} = (b_1, \ldots, b_n) \in \bR^n$.  By \cite{A1989}*{Theorem 1.3} (specifically the proof) there exists a sequence $\vec{c}_0, \vec{c}_1, \ldots, \vec{c}_{n-1} \in \bR^n$ such that
\begin{itemize}
\item $\vec{c}_0 = \vec{b}$,
\item $\vec{c}_{n-1} = \vec{a}$,
\item at least $k$ entries of $\vec{c}_k$ equal the corresponding entries of $\vec{a}$ for all $0 \leq k \leq n-1$, 
\item for any $1 \leq k \leq n-2$, $\vec{c}_k$ and $\vec{c}_{k+1}$ differ in either 0 or 2 entries, and
\item for any $1 \leq k \leq n-2$, if $\vec{c}_k$ and $\vec{c}_{k+1}$ differ in the entries $1 \leq i,j \leq n$ with $i \neq j$, then there exists a $t_k \in (0,1)$ such that $c_{k+1, i} = t_k c_{k,i} + (1-t_k) c_{k,j}$ and $c_{k+1, j} = (1-t_k) c_{k,i} + t_k c_{k,j}$.
\end{itemize}
In the case where $\vec{c}_k$ and $\vec{c}_{k+1}$ differ in two entries as above, define $W_{k,1}, W_{k,2} \in \M_n$ by
\begin{align*}
W_{1,k} e_p &= e_p \text{ if }p \neq i,j  & W_{2,k} e_p &= e_p \text{ if }p \neq i,j \\
W_{1,k} e_i &= \cos(\theta) e_i + \sin(\theta) e_j & W_{2,k} e_i &= \cos(\theta) e_i - \sin(\theta) e_j\\
W_{1,k} e_j &= -\sin(\theta) e_i + \cos(\theta) e_j & W_{2,k} e_j &= \sin(\theta) e_i + \cos(\theta) e_j
\end{align*}
where $\theta \in \bR$ is such that $\cos^2(\theta) = t$.  Clearly $W_{1,k}$ and $W_{2,k}$ are unitaries.  Moreover, a computation shows that
\[
\diag(\vec{c}_{k+1}) = \frac{1}{2} W_{1,k}^* \diag(\vec{c}_k) W_{1,k} + \frac{1}{2} W_{2,k}^* \diag(\vec{c}_k) W_{2,k}.
\]
Hence, by substituting one convex combination into another and by repeating unitaries if necessary, there exists unitaries $W_1, \ldots, W_{2^{n-1}} \in \M_n$ such that
\[
\diag(a_1, \ldots, a_n) = \sum^{2^{n-1}}_{i=1} \frac{1}{2^{n-1}} W^*_i \diag(b_1, \ldots, b_n) W_i.
\]
Hence $U_i = VW_i U^*$ satisfy the desired equation.
\end{proof}

It is worthwhile to note that \cite{BL2020}*{Proposition 2.6} shows one can replace the occurrences of $2^{n-1}$ in Lemma \ref{lem:nice-prob-vector} with $n$ via the Schur-Horn Theorem.  The above proof has been included for discussions at the end of this section.

\begin{thm}\label{thm:everything-good-in-infinite-dimensions?}
Let $A, B \in \ell_\infty(\bN, \M_n)$ be self-adjoint.  Then $A \prec_c B$ if and only if $A \in \conv(\U(B))$.
\end{thm}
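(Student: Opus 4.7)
The plan is to prove the two implications separately. For the easy direction $A \in \conv(\U(B)) \implies A \prec_c B$, I would adapt the argument of (6) $\implies$ (1) in Proposition \ref{prop:elementary-equivalences}. Since $\bN$ is discrete, a pointwise spectral decomposition of a self-adjoint element yields a norm-one (hence bounded) unitary-valued function, so $A$ and $B$ are CSD via bounded diagonalizing unitaries $U, V \in \ell_\infty(\bN, \M_n)$. Writing $A = \sum_{i=1}^k t_i U_i^* B U_i$, the unitaries $W_i = V U_i U^* \in \ell_\infty(\bN, \M_n)$ yield a bounded doubly stochastic function $X \in \ell_\infty(\bN, DS_n)$ by taking (pointwise) the convex combination of the unistochastic matrices formed from the squared moduli of the entries of $W_i$. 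Since entries of unitary matrices lie in the closed unit disc, $X$ is automatically bounded, so $A \prec_c B$.

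For the nontrivial direction, assume $A \prec_c B$. At each $j \in \bN$ one has $A(j) \prec B(j)$ in $\M_n$. The key tool is Lemma \ref{lem:nice-prob-vector}: at every $j$ there exist unitaries $U_1(j), \ldots, U_{2^{n-1}}(j) \in \M_n$ with
\[
A(j) \;=\; \sum_{i=1}^{2^{n-1}} \frac{1}{2^{n-1}} \, U_i(j)^* \, B(j) \, U_i(j).
\]
The essential feature is that both the number of terms $2^{n-1}$ and the uniform weights $1/2^{n-1}$ depend only on $n$, not on $j$. Defining $U_i : \bN \to \M_n$ pointwise produces, for each $i$, a norm-one (hence bounded) unitary $U_i \in \ell_\infty(\bN, \M_n)$, and the identity above holds pointwise, hence globally in $\ell_\infty(\bN, \M_n)$. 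Thus $A \in \conv(\U(B))$.

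The main obstacle — and the reason this cleanly works here while failing on $[-1,1]$ or on $\{0\} \cup \{1/n : n \in \bN\}$ for $m$-tuples — is concentrated in Lemma \ref{lem:nice-prob-vector}: the uniformity of the probability vector across points is exactly what permits pointwise solutions to be glued into a single finite convex combination. Over a discrete index set there is no continuity constraint on how the unitaries vary, so boundedness comes for free, and Lemma \ref{lem:nice-prob-vector} alone suffices. In contrast, Corollary \ref{cor:cont-major-does-not-imply-exact-convex-hull} exploits the inability to choose the unitaries \emph{continuously} over $[-1,1]$, an obstruction that simply does not arise in $\ell_\infty(\bN, \M_n)$.
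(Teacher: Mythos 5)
Your proof is correct and follows essentially the same route as the paper: reduce to pointwise majorization $A(j)\prec B(j)$, invoke Lemma \ref{lem:nice-prob-vector} so that the number of unitaries and the uniform weights $1/2^{n-1}$ are independent of $j$, and assemble the pointwise unitaries into unitaries in $\ell_\infty(\bN,\M_n)$. The easy direction is likewise the paper's, which simply cites Proposition \ref{prop:elementary-equivalences} where you rederive it, and your closing remark about the uniform probability vector being the crucial point matches the paper's own discussion.
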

\begin{proof}
Recall if $A \in \conv(\U(B))$ then $A \prec_c B$ by Proposition \ref{prop:elementary-equivalences}.

Conversely, suppose $A \prec_c B$.  Write $A = (A_k)_{k\geq 1}$ and $B = (B_k)_{k\geq 1}$ for self-adjoint $A_k, B_k \in \M_n$.  Since $A \prec_c B$ implies $A \prec_\pt B$ by Proposition \ref{prop:elementary-equivalences}, Theorem \ref{thm:matrix-only-result} implies $A_k \prec B_k$ for all $k \in \bN$.  Hence Lemma \ref{lem:nice-prob-vector} implies for all $k \in \bN$ there exist unitaries $U_{k,1},\ldots, U_{k,2^{n-1}} \in \M_n$ such that
\[
A_k = \sum^{2^{n-1}}_{i=1} \frac{1}{2^{n-1}} U_{k,i}^* B_k U_{k,i}.
\]
Therefore, for all $1 \leq i \leq 2^{n-1}$ we define $U_i = (U_{k,i})_{k\geq 1}$, then $U_i \in \ell_\infty(\bN, \M_n)$ is a unitary such that
\[
A = \sum^{2^{n-1}}_{i=1} \frac{1}{2^{n-1}} U^*_i B U_i. \qedhere
\]
\end{proof}

The proof of Theorem \ref{thm:everything-good-in-infinite-dimensions?} is possible because although doubly stochastic matrices used in joint majorization need not be unistochastic, if we only care about majorization of single self-adjoint operators, \cite{A1989}*{proof of Theorem 1.4} (which is effectively what is used in Lemma \ref{lem:nice-prob-vector}) says one can use a unistochastic matrix that is obtained by an product of $2 \times 2$ rotation matrices thereby yielding the result.  Consequently, if we let $SDS_n$ denote all of the doubly stochastic matrices $X$ where there exist a $t \in [0,1]$ and $1 \leq i,j \leq n$ with $i \neq j$ such that
\begin{align*}
Xe_k &= e_k \text{ if } k \neq i,j \\
Xe_i &= t e_i  + (1-t) e_j \\
X e_j &= (1-t) e_i + t e_j
\end{align*}
where $\{e_m\}^n_{m=1}$ is the standard basis of $\bR^n$, an affirmative answer to the following question yields an affirmative answer to Question \ref{ques:m=1?}.

\begin{ques}\label{ques:special-ds}
If $\X$ is a compact metric space, if $f_1,\ldots, f_n, g_1, \ldots, g_n \in C(\X)$, and if $X \in C(\X, DS_n)$ are such that
\[
X(x) (g_1(x), \ldots, g_n(x))^T = (f_1(x), \ldots, f_n(x))^T
\]
for all $x \in \X$, does there exists a $Y \in C(\X, DS_n)$ that is a product of elements of $C(\X, SDS_n)$ such that
\[
Y(x) (g_1(x), \ldots, g_n(x))^T = (f_1(x), \ldots, f_n(x))^T
\]
for all $x \in \X$?
\end{ques}

An affirmative answer to Question \ref{ques:special-ds} via an algorithmic procedure like that described in Lemma \ref{lem:nice-prob-vector} should yield an appropriate element of $C(\X, US_n)$ to develop a continuous version of the Schur-Horn Theorem.

\section*{Acknowledgements}

We would like to thank the referee of this paper for their useful comments, corrections, and improvements such as the updated example used in Proposition \ref{prop:point-doesnt-go-to-continuous}.


\end{document}